\providecommand{\U}[1]{\protect \rule{.1in}{.1in}}
\theoremstyle{change}
\newtheorem{definition}{Definition:}[section]
\newtheorem{proposition}[definition]{Proposition:}
\newtheorem{theorem}[definition]{Theorem:}
\newtheorem{lemma}[definition]{Lemma:}
\newtheorem{remark}[definition]{Remark:}
\newenvironment{proof}
{{\bf Proof:}}
{\qquad \hspace*{\fill} $\Box$}
\newcommand{\CC}{\mathcal{C}}
\newcommand{\R}{\mathbb{R}}
\begin{document}

	\title{The control set of a linear control system on the two dimensional solvable Lie group}
	\author{V\'{\i}ctor Ayala \thanks{%
			Supported by Proyecto Fondecyt $n^{o}$ 1150292, Conicyt, Chile} \\
		Universidad de Tarapac\'a\\
		Instituto de Alta Investigaci\'on\\
		Casilla 7D, Arica, Chile\\
		and\\
		Adriano Da Silva \thanks{%
			Supported by Fapesp grant n%
			${{}^o}$
			2018/10696-6.}\\
		Instituto de Matem\'atica,\\
		Universidade Estadual de Campinas\\
		Cx. Postal 6065, 13.081-970 Campinas-SP, Brasil.\\
	}
\date{\today }
\maketitle

\begin{abstract}
In this paper we explicitly calculate the control sets associated with a
linear control system on the two dimensional solvable Lie group. We show that a linear control system of such kind admits exactly one control set or infinite control sets depending on some algebraic conditions.

\end{abstract}

{\small \textbf{Keywords:} linear control systems, solvable Lie group}

{\small \textbf{Mathematics Subject Classification (2010):  93B05, 93C05, 22E25} }

\section{Introduction}

The classical linear control systems on Euclidean Spaces
are well known. They are relevant for many theoretical and practical reasons. In particular, they appear in several physical applications (\cite{Leitmann, P-B-G-M, Shell}). They are naturally extended to general Lie groups as showed in  \cite{Markus} for matrices groups, and  then in \cite{AyTi} for any connected Lie group $G$.

In the last twenty-five years, several works addressing controllability,
observability and optimization problems appears for this kind of control
systems, (see \cite{DSAy, DSAyGZ, San, DS, DaJo, Jouan1, Jouan2}).
Furthermore, in \cite{Jouan2} Jouan shows that any affine control system on a
connected manifold that generates a finite dimensional Lie algebra is
diffeomorphic to a linear control system on a Lie group, or on a homogeneous
space. Hence, such kind of generalization is also relevant for the
classification of general affine control systems on abstract connected
manifolds. 

A fundamental notion in control theory is the controllability property of a
control system answering the following question: given an initial state of the
system, is it possible to reach any arbitrary state through admissible
trajectories in positive time? Or better, there are some regions of the space
of state where controllability holds? For instance, in \cite{Shattler} the
authors work out the problem \emph{Optimal controls for a two-compartment
model for cancer chemotherapy \ with quadratic objective. }The space state of
this model is the plane, and its dynamic is given by two matrices which are
elements of the Lie algebra $\mathrm{sl}(2, \mathbb{R})$, of real matrices of
order two and trace zero, see \cite{2-dim} for an algebraic controllability
condition. Therefore, in this case, the controllability property reads as:
given an initial condition $x_{0}$ there exists an admissible control
transferring $x_{0}$ in positive time in a new condition $x_{1}.$ In other
words, is it possible to find a medical strategy to transform an initial level
of disease, at another final level of health$,$ in a positive time$?$. Among
the all possibles estrategies transfering $x_{0}$ into $x_{1}$, you need to
find the optimal control which minimizes the quadratic objective. In our
practical example, to find the optimal control which minimizes the collateral
effects. 

In real life, not any sick condition can be transformed in a health one. Since
in the interior $\mathrm{int}(\mathcal{C)}$ of any control set $\mathcal{C}$ controllability holds, it is fundamental to know about the
existence and uniqueness of control sets, especially those
with non empty interior. And certainly, to characterize the control sets of a
control system in any possible case. The main goal of this paper is to compute
every control set for a linear control system on a solvable Lie group of
dimension two, with and without empty interior.


Because of our intention to reach an audience as bigger as possible, we avoid
describing a linear control system as usual through the Lie theory. On the
contrary, we look at linear control systems as special systems evolving on an open half-plane of $\R^2$. Furthermore, aiming a  better understanding and reading of the article, we have include figures of each possible control set. 

The paper is structured as follows: Section 2 contains the basic definition of
control systems, accessible and control sets. We also describe the two
dimensional solvable Lie groups given by the open half-plane $G=\mathbb{R}%
_{+}\times \mathbb{R}$ endowed with its associated non Abelian product. \ In
section 3 we describe case by case the control sets of linear control systems that are conjugated with our initial system. That allow us to know when such sets have empty or nonempty interior and their uniqueness. Finally, at the end of Section 3 we use the group of
automorphisms of $G$ in order to see what are the possibilities for the
control sets of a general linear control system on $G$.

\section{Preliminaries}

\subsection{Control systems and their control sets}

Let $M$ be a $d$-dimensional smooth manifold. A \emph{control system} in $M$
is the family of ordinary differential equations
$$
\dot{x}(t)=f(x(t), u(t)), \; \; \;u\in \mathcal{U},\label{controlsystem}%
$$
where $f: M\times \mathbb{R}^{m}\rightarrow TM$ is a smooth map
and $\mathcal{U}\subset L_{\mathrm{loc}}^{\infty}(\mathbb{R}, \mathbb{R}^{m})$
is the set of the piecewise constant functions whose image are contained in a
compact convex set $\Omega \subset \mathbb{R}^{m}$. For any $x\in M$ and
$u\in \mathcal{U}$ we denote by $\phi(t, x, u)$ the unique solution of
(\ref{controlsystem}) with initial value $x=\varphi(0, x, u)$. The set of
points \emph{reachable from $x$ up to time} $\tau>0$ and the \emph{positive
orbit of} $x$ are given, respectively, by
\[
\mathcal{O}^{+}_{\leq \tau}(x):=\{ \varphi(t, x, u), \; \;t\in[0, \tau
]\; \;u\in \mathcal{U}\} \; \; \; \mbox{ and }\; \; \; \mathcal{O}^{+}(x):=\bigcup
_{t>0}\mathcal{O}^{+}_{t}(x).
\]
With $\mathcal{O}^{-}_{\leq \tau}(x)$ and $\mathcal{O}^{-}(x)$ we denote the
corresponding sets for the time-reversed system. We say that the system
(\ref{controlsystem}) is \emph{locally accessible from} $x$ if $\mathrm{int}%
\mathcal{O}^{\pm}_{\leq \tau}(x)\neq \emptyset$ for all $\tau>0$. A sufficiente
condition for locally accessibility is the Lie algebra rank condition (LARC).
It is satisfied if the Lie algebra $\mathcal{L}$ generated by the vector
fields $x\in M\mapsto f_{u}(x):=f(x, u)$, for $u\in \Omega$, satisfies
$\mathcal{L}(x)=T_{x}M$ for all $x\in M$.

A set $\CC\subset M$ is a \emph{control set} of \ref{controlsystem} if it is
maximal w.r.t. set inclusion with the following properties: 

\begin{enumerate}
\item[(i)] $\CC$ is \emph{controlled invariant}, i.e., for each $x\in \CC$ there
is $u\in \mathcal{U}$ with $\varphi(\mathbb{R}_{+},x,u) \subset \CC$.

\item[(ii)] \emph{Approximate controllability} holds on $\CC$, i.e., $\CC
\subset \operatorname{cl}\mathcal{O}^{+}(x)$ for all $x\in \CC$.
\end{enumerate}

Following \cite{CK}, Proposition 3.2.4., any subset $\CC$ of $M$ with nonempty
interior that is maximal with property (ii) in the above definition is a
control set.

Let us consider $\psi:M\rightarrow N$ to be a diffeomorphism and consider
\[
\dot{x}(t)=f(x(t), u(t))\; \; \mbox{ and }\; \; \dot{y}(t)=g(y(t), u(t))
\; \; \;u\in \mathcal{U}%
\]
control systems on $M$ and $N$, respectively. We say that $\psi$
\textit{conjugates} the control systems if
\[
g(\psi(x), u)=(d\psi)_{x}f(x, u) \; \; \mbox{ for any } \; \;x\in G,
u\in \mathcal{U}.
\]
In this cases we say that the control systems are equivalent. The conjugation
of control systems will be used ahead several times.

\subsection{Two-dimensional linear control systems}

In this section we analyze linear control systems on the two-dimensional
solvable Lie group.

Let us denote by $G=\mathbb{R}_{+}\times \mathbb{R}$ the open half-plane of
$\mathbb{R}^{2}$ and endow it with the product
\[
(x_{1}, y_{1})\cdot(x_{2}, y_{2})=(x_{1}x_{2}, y_{2}+x_{2}y_{1}).
\]
It is a standard fact that the $G$ is in fact a Lie group and, up to an
isomorphism, is the unique two-dimensional solvable Lie group.

Following \cite{DaJo}, a \textit{linear} vector field on $G$ is a vector field
of the form
\[
\mathcal{X}(x, y)=(0, a(x-1)+by), \; \; \mbox{ for some }\; \;(a, b)\in
\mathbb{R}^{2}.
\]
Moreover, a simple calculation shows that the left-invariant vector fields of
$G$ are of the form
\[
Y(x, y)=(x\alpha, x\beta), \; \; \mbox{ for some }\; \;(\alpha, \beta
)\in \mathbb{R}^{2}.
\]
Let $\Omega=[u_{*}, u^{*}]$ with $u_{*}<0<u^{*}$. A \textit{linear control
system} on $G$ is a system of the form
\[
\dot{(x, y)}=\mathcal{X}(x, y)+uY(x, y), \; \; \; \mbox{ with }\; \;u\in \Omega,
\]
where $\mathcal{X}$ and $Y$ are nontrivial vector fields. In coordinates,
\[
\hspace{-2cm}(\Sigma)\hspace{2cm}\left \{
\begin{array}
[c]{l}%
\dot{x}=u\alpha x\\
\dot{y}=a(x-1)+by+ux\beta
\end{array}
\right. , \; \; \mbox{ where }\; \;u\in \Omega \; \; \mbox{ and }\; \;(a, b), (\alpha,
\beta)\in \mathbb{R}^{2}\setminus \{(0, 0)\}.
\]
A simple calculation shows that
\[
\mathcal{L}(x, y)=\mathrm{span}\{(u\alpha x, a(x-1)+by+ux\beta), (0,
ux(a\alpha+b\beta)), \; \;u\in \Omega \}
\]
and $\mathcal{L}(x, y)=\mathbb{R}^{2}$ for all $(x, y)\in G$ if and only if
$\alpha(a\alpha+b\beta)\neq0$, that is, the LARC holds for $\Sigma$ if and
only if $\alpha(a\alpha+b\beta)\neq0$.

In order to analyze the control sets of $\Sigma$ it will be necessary to
conjugate the system in order to simplify it. Because of that we need the
following notion: An automorphism of $G$ is a map $\psi:G\rightarrow G$ that
preserves the product, that is,
\[
\psi((x_{1},y_{1})\cdot(x_{2},y_{2}))=\psi(x_{1},y_{1})\cdot \psi(x_{2}%
,y_{2}),\; \;(x_{1},y_{1}),(x_{2},y_{2})\in G.
\]
The automorphisms $\psi:G\rightarrow G$ have the form
$$
\psi(x,y)=(x,c(x-1)+dy),\; \;d\in \mathbb{R}^{\ast}.\label{auto}%
$$
Moreover, the automorphisms of $G$ preserves linear and left-invariant vector
fields and hence conjugates linear control systems. This fact will be used
ahead several times in order to simplify calculations.

\section{The control sets of linear control systems on $G$}

The aim of this section, is analyze the control sets of a given linear control
system. In order to do that we conjugate the given system by an automorphism
in to simplify the calculations and make the problem more abordable to deal with.

The next result, which we will prove through the following sections summarize
our findings.

\begin{theorem}
\label{main}  For the control system $\Sigma$ it holds that 

\begin{itemize}

\item[1.] If $\alpha=0$ then $\Sigma$ has infinite control sets; 

\item[2.] If $\alpha \neq0$ then $\Sigma$ admits a unique control set that has
nonempty interior if and only if the LARC holds. 
\end{itemize}
\end{theorem}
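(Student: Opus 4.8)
The plan is to analyze the system $\Sigma$ by splitting into the two announced cases, exploiting the explicit form of the dynamics to understand where trajectories can go.

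First consider the case $\alpha = 0$. Then the first equation becomes $\dot x = 0$, so $x(t)$ is constant along every admissible trajectory regardless of the control. This means each horizontal line $\{x = x_0\}$ (with $x_0 > 0$) is invariant, and the system decouples: on each such line the second equation reads $\dot y = a(x_0 - 1) + b y + u x_0 \beta$, which is a scalar affine control system in $y$ with control range $u \in [u_*, u^*]$. The plan is to analyze this one-dimensional affine system: its reachable behavior is governed by the sign of $b$ and the interval of attainable drift values $\{a(x_0-1) + b y + u x_0\beta : u \in \Omega\}$. For each fixed $x_0$ one identifies a (possibly degenerate) control set $\CC_{x_0}$ in $y$, and since distinct lines cannot communicate, these yield a distinct control set for (a range of) each value of $x_0$. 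This produces infinitely many control sets, establishing item 1. The main subtlety here is a clean description of the scalar control set in terms of $b$ (whether the drift is contracting, expanding, or neutral) and verifying maximality with respect to the full two-dimensional system rather than just the restricted line.

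For item 2, assume $\alpha \neq 0$. Here I would first use the automorphism group of $G$ to conjugate $\Sigma$ into a normalized form, as the paper signals it will do; an automorphism $\psi(x,y) = (x, c(x-1)+dy)$ leaves $x$ untouched but can be chosen to simplify the second coordinate, ideally reducing the pair $(a,b)$ or the coupling in $\beta$ to a canonical representative. With $\alpha \neq 0$ the equation $\dot x = u\alpha x$ shows that $x$ is now genuinely controllable: choosing $u > 0$ or $u < 0$ drives $x$ up or down (exponentially), so the whole range $x \in \R_+$ becomes accessible, and since $u_* < 0 < u^*$ one can move $x$ in both directions. The plan is to show there is a single region where approximate controllability holds by demonstrating that from any point one can steer to a common recurrent set, then invoke maximality. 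Uniqueness should follow from the fact that $x$ can be moved freely together with a controllability argument in the $y$-direction once $x$ varies.

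The interior dichotomy is where the LARC, computed in the excerpt as $\alpha(a\alpha + b\beta) \neq 0$, enters. If the LARC holds, then by the stated consequence of \cite{CK} any set maximal with the approximate-controllability property and having nonempty interior is automatically a control set, and local accessibility guarantees the reachable sets have interior, so the unique control set inherits nonempty interior. If instead $\alpha \neq 0$ but $a\alpha + b\beta = 0$, the Lie algebra $\mathcal{L}(x,y)$ collapses to a one-dimensional distribution spanned by $(u\alpha x, \cdot)$, meaning trajectories are confined to the integral curves of a single vector field (up to time-reparametrization by the sign and magnitude of $u$); these curves foliate $G$ by one-dimensional leaves, so the unique control set must lie inside such a leaf and hence have empty interior. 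The main obstacle I anticipate is the careful verification of uniqueness in the case $\alpha \neq 0$: I must rule out multiple control sets by showing that the free motion in $x$ forces all approximately-controllable regions to merge, which will likely require a concrete construction of trajectories connecting arbitrary points through large or small values of $x$, and a transversality or recurrence argument to close up the $y$-coordinate.
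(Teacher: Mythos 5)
Your treatment of item 1 follows essentially the paper's route (the vertical lines $\{x_0\}\times\R$ are invariant, the system decouples into a scalar affine system on each line, and the one-dimensional control sets -- whole lines near $x_0=1$ when $b=0$, compact segments for every $x_0$ when $b\neq 0$ -- give infinitely many control sets of $\Sigma$), so that part is sound modulo the deferred 1D analysis. The genuine error is in item 2, in the subcase $\alpha\neq 0$, $a\alpha+b\beta=0$. You claim that the Lie algebra collapses to a one-dimensional distribution and that trajectories are confined to one-dimensional leaves, forcing the control set to have empty interior. This is false. In this subcase necessarily $b\neq 0$, and after the paper's normalization $\psi(x,y)=(x,y-(x-1)\beta\alpha^{-1})$ the system reads $\dot x=u\alpha x$, $\dot y=by$: the drift $(0,by)$ and the control field $(\alpha x,0)$ are linearly independent at every point with $y\neq 0$, so $\mathcal{L}(x,y)=\R^2$ off the line $y=0$; the LARC fails only because the span degenerates \emph{on} that one line. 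Consequently positive orbits of points with $y\neq 0$ are two-dimensional (indeed $\operatorname{cl}\OC^+(x_0,y_0)$ is the strip between $y=0$ and $y=y_0$), there is no foliation by integral curves, and your argument for empty interior collapses. The correct argument, which is the paper's, is different: the line $y=0$ (i.e.\ $y=\beta\alpha^{-1}(x-1)$ in the original coordinates) is invariant and is a control set because $x$ can be steered in both directions along it; and no point off this line satisfies approximate controllability because $|y(t)|=\mathrm{e}^{bt}|y_0|$ is strictly monotone under \emph{every} control, so once $|y|$ has decreased (for $b<0$) one can never return. Hence the unique control set is that line, which has empty interior.

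The LARC case also contains a gap that is more than a technicality: your uniqueness heuristic, ``$x$ can be moved freely together with a controllability argument in the $y$-direction once $x$ varies,'' points in the wrong direction when $b\neq 0$. There the unique control set is \emph{not} all of $G$ but the proper closed cone $\CC=\bigcup_{u\in B}r_u$, where $r_u=\{y=m_ux\}$, $m_u=u/(u\alpha-b)$ and $B=\{u\in\Omega:\,u\alpha-b>0\}$; outside this cone the $y$-direction is not controllable at all, since the quantities $m_{u_*}\varphi_1-\varphi_2$ (resp.\ $\varphi_2-m_{u^*}\varphi_1$) contract like $\mathrm{e}^{bt}$ under every control. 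If your heuristic were valid it would yield controllability of all of $G$, which holds only when $b=0$. The paper's real work is exactly what your plan leaves open: proving positive invariance of the cone from monotonicity of $u\mapsto m_u$, steering between rays inside the cone, and ruling out control sets in the complement via the estimate $\OC^+(\varphi(t,(x,y),u))\subset N_{\epsilon(t)}(\CC)$ with $\epsilon(t)\to 0$, which forces any putative control set meeting the complement back into $\CC$, a contradiction. Without the false leaf argument and with these steps missing, the proposal does not yet establish item 2.
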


The proof of the theorem is divided in the next sections.

\subsection{The case $\alpha=a\alpha+b\beta=0$}

Since $(a,b),(\alpha,\beta)\in \mathbb{R}^{2}\setminus \{(0,0)\}$ the above
condition implies that $\alpha=b=0$ and $a,\beta \in \mathbb{R}^{\ast}$ and
therefore, the system $\Sigma$ is of the form,
\[
\left \{
\begin{array}
[c]{l}%
\dot{x}=0\\
\dot{y}=a(x-1)+ux\beta
\end{array}
\right.  ,\; \; \mbox{ where }\; \;u\in \Omega,
\]
whose solutions starting at $(x,y)\in G$ are given by
\[
\varphi(t,(x,y),u)=(x,(a(x-1)+ux\beta)t+y),\; \;t\in \mathbb{R}.
\]
We claim
\[
\mathcal{C}_{x}:=\{x\} \times \mathbb{R}%
\; \mbox{ is a control set for any }\;x\in(1-\varepsilon,1+\varepsilon).
\]

In fact, for any $x\in \mathbb{R}_{+}$ the line $\{x\} \times \mathbb{R}\subset
G$ is invariant by the solutions of $\Sigma$. On the other hand, if
$x\in(1-\varepsilon, 1+\varepsilon)$ and there exist $u_{1}, u_{2}\in \Omega$
such that $a(x-1)+u_{1}x\beta<0$ and $a(x-1)+u_{2}x\beta>0$. Hence, it turns
out that
\[
\varphi_{2}(t, (x, y), u_{1})\rightarrow-\infty \; \; \mbox{ and } \; \; \varphi
_{2}(t, (x, y), u_{2})\rightarrow+\infty \; \mbox{ as }\;t\rightarrow+\infty
\]
where $\varphi_{2}$ stands for the second component of $\varphi$. Therefore,
$\mathcal{O}^{+}(x, y)=\{x\} \times \mathbb{R}$ for any $y\in \mathbb{R}$
implying that $\mathcal{C}_{x}$ is a control set for any $x\in(1-\varepsilon,
1+\varepsilon)$ (see Figure \ref{fig1}). In particular, the control system $\Sigma$ admits an infinite number of
control sets.

\begin{figure}[h]
\begin{center}
\includegraphics[scale=1]{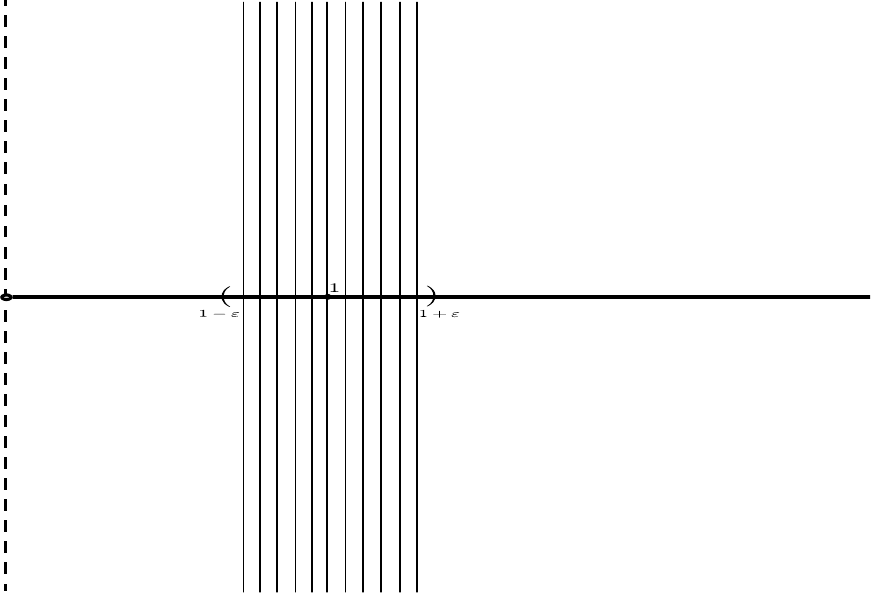}
\end{center}
\par
\caption{The control sets of $\Sigma$.}
\label{fig1} \end{figure}

\subsection{The case $\alpha=0$ and $a\alpha+b\beta \neq0$}

In this case, we necessarily have that $b,\beta \in \mathbb{R}^{\ast}$.
Therefore, the map $\psi(x,y):=(x,a(x-1)+by)$ is an automorphism of $G$ that
conjugates $\Sigma$ and the linear control system,
\begin{equation}
\left \{
\begin{array}
[c]{l}%
\dot{x}=0\\
\dot{y}=by+uxb\beta
\end{array}
\right.  ,\; \; \mbox{ where }\; \;u\in \Omega,\label{2}%
\end{equation}
whose solutions starting at $(x,y)\in G$ are given by
\[
\varphi(t,(x,y),u)=(x,\mathrm{e}^{tb}y+(\mathrm{e}^{tb}-1)ux\beta
),\; \;t\in \mathbb{R}.
\]
Let us analyze the case where $b<0$ since the other case is analogous. For any
given $x\in \mathbb{R}_{+}$ we use the compactness of $\Omega$ to define
\[
y_{1}(x):=\min \{-ux\beta,\; \;u\in \Omega \} \; \; \mbox{ and }\; \;y_{2}%
(x):=\max \{-ux\beta,\; \;u\in \Omega \}.
\]
Since $0\in \operatorname{int}\Omega$ we get $y_{1}(x)<0<y_{2}(x)$. We claim
that the set
\[
\mathcal{C}_{x}=\{x\} \times \left[  y_{1}(x),y_{2}(x)\right]
\]
is a positively-invariant control set of (\ref{2}). In fact, for any
$u\in \Omega$ and $y\in \left[  y_{1}(x),y_{2}(x)\right]  $ it holds that
\[
y_{1}(x)-\varphi_{2}(t,(x,y),u)=y_{1}(x)-\mathrm{e}^{tb}y+(1-\mathrm{e}%
^{tb})ux\beta
\]%
\[
\leq y_{1}(x)-\mathrm{e}^{tb}y_{1}(x)+(1-\mathrm{e}^{tb})ux\beta
\leq(1-\mathrm{e}^{tb})(y_{1}(x)+ux\beta)\leq0\implies y_{1}(x)\leq \varphi
_{2}(t,(x,y),u)
\]
and
\[
y_{2}(x)-\varphi_{2}(t,(x,y),u)=y_{2}(x)-\mathrm{e}^{bt}y+(1-\mathrm{e}%
^{tb})ux\beta
\]%
\[
\geq y_{2}(x)-\mathrm{e}^{bt}y_{2}(x)+(1-\mathrm{e}^{tb})ux\beta
=(1-\mathrm{e}^{bt})(y_{2}(x)+ux\beta)\geq0\implies \varphi_{2}(t,(x,y),u)\leq
y_{2}(x)
\]
showing that $\mathcal{C}_{x}$ is positively-invariant.

Let $u_{1},u_{2}\in \Omega$ such that $y_{i}(x)+u_{i}x\beta=0,\;i=1,2$. Then,
for any $y\in(y_{1}(x),y_{2}(x))$ we have that
\[
\varphi_{2}(t,(x,y),u_{i})\rightarrow-u_{i}x\beta=y_{i}(x)
\]
implying that $\mathcal{C}_{x}\subset \operatorname{cl}\mathcal{O}^{+}(x,y)$
for any $y\in(y_{1}(x),y_{2}(x))$. By continuity and invariance, we get that
\[
\mathcal{C}_{x}=\operatorname{cl}\mathcal{O}^{+}%
(x,y),\; \; \mbox{ for any }\; \;(x,y)\in \mathcal{C}_{x}%
\]
concluding the proof.

A simple calculation shows that $[y_{1}(x_{0}), y_{2}(x_{0})]\subset[y_{1}(x_{1}), y_{2}(x_{1})] $ if $x_0<x_1$. Hence (\ref{2}) admits an infinite number of control sets (see Figure \ref{fig2}).

\begin{figure}[h]
\begin{center}
\includegraphics[scale=1]{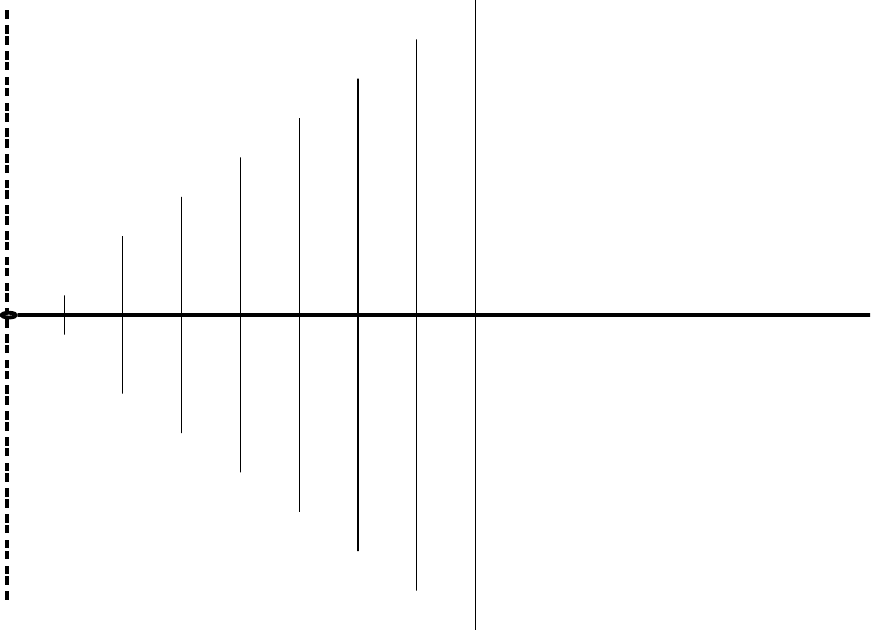}
\end{center}
\par
\caption{The control sets of (\ref{2}).}
\label{fig2} \end{figure}

\subsection{The case $\alpha \neq0$ and $a\alpha+b\beta=0$}

In this case, we necessarily have that $b\neq0$. Let us consider the
automorphis of $G$ given by $\psi(x, y):=(x, y-(x-1)\beta \alpha^{-1})$. We
have that $\psi$ conjugates $\Sigma$ and the linear control system
\begin{equation}
\label{4}\left \{
\begin{array}
[c]{l}%
\dot{x}=u\alpha x\\
\dot{y}=by
\end{array}
\right. , \; \; \mbox{ where }\; \;u\in \Omega,
\end{equation}
whose solutions starting at $(x, y)\in G$ are given by concatenations of
flows
\[
\varphi(t, (x, y), u)=(\mathrm{e}^{tu\alpha}x, \mathrm{e}^{tb}y),
\; \;t\in \mathbb{R}.
\]
For the above control system, the only control set is given by $\mathcal{C}%
=\mathbb{R}_{+}\times \{0\}$.

Let us first show that $\mathcal{C}$ is a control set. We notice that
$\varphi(t, \mathcal{C}, u)\subset \mathcal{C}$ for any $u\in \Omega$ and
$t\in \mathbb{R}$. On the other hand, if $\varphi_{1}$ is the first component
of $\varphi$, it holds that
\[
\left \{
\begin{array}
[c]{l}%
\varphi_{1}(t, (x, 0), u)\rightarrow+\infty, \; \; \mbox{ for }\; \;t\rightarrow
+\infty \; \; \mbox{ when }\; \; \alpha u>0\\
\varphi_{1}(t, (x, 0), u)\rightarrow0, \; \; \; \; \mbox{ for }\; \;t\rightarrow
+\infty \; \; \mbox{ when }\; \; \alpha u<0
\end{array}
\right. ,
\]
implying that $\mathcal{C}=\operatorname{cl}(\mathcal{O}^{+}(x, 0))$ for any
$x\in \mathbb{R}_{+}$ and consequently that $\mathcal{C}$ is a control set.

Let us assume that $b<0$ and show the uniqueness. The case $b>0$ is analogous.
In order to do it is enough to show that no point in $G$ outside $\mathcal{C}$
satisfies condition (ii) in the definition of control sets.

In fact, let $(x_{0}, y_{0})\in G$ with $y_{0}\neq0$. By the form of the
solutions $\operatorname{cl}(\mathcal{O}^{+}(x_{0}, y_{0}))$ is comprehended
between the lines $y=0$ and $y=y_{0}$. Moreover, if $(x_{1}, y_{1}%
)\in \mathcal{O}^{+}(x_{0}, y_{0})$ there exists $t_{0}>0$ such that
$y_{1}=\mathrm{e}^{bt_{0}}y_{0}<y_{0}$ if $y_{0}>0$ and $y_{1}=\mathrm{e}%
^{bt_{0}}y_{0}>y_{0}$ if $y_{0}<0$. Thus $(x_{0}, y_{0})$ is outside the
region determine by the lines $y=0$ and $y=y_{1}$ and therefore $(x_{0},
y_{0})\notin \operatorname{cl}(\mathcal{O}^{+}(x_{1}, y_{1}))$ if $(x_{1},
y_{1})\in \operatorname{cl}(\mathcal{O}^{+}(x_{0}, y_{0}))$. Consequently, no
points in $\mathcal{M}\setminus \mathcal{C}$ can be inside a control set
implying the uniqueness of $\mathcal{C}$ (see Figure \ref{fig3}).

\begin{figure}[h]
\begin{center}
\includegraphics[scale=1]{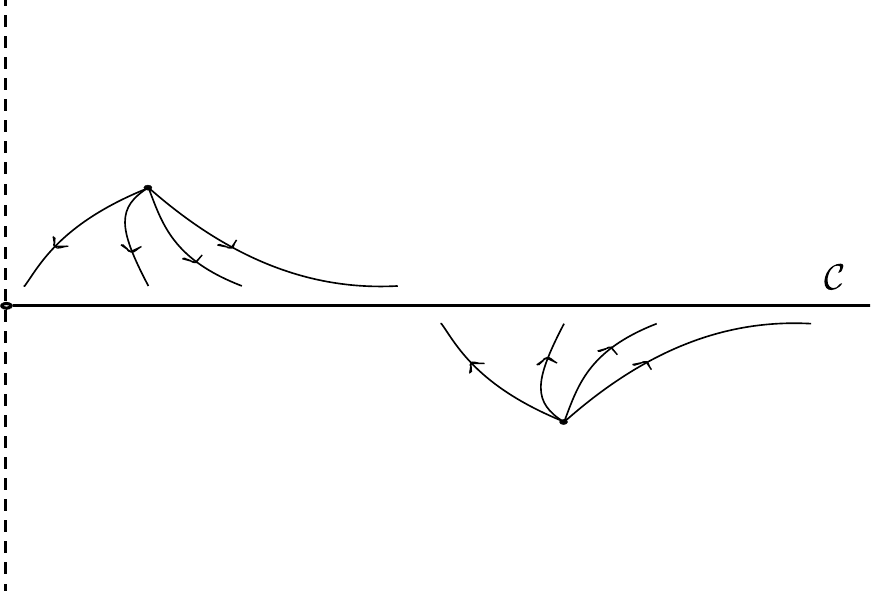}
\end{center}
\par
\caption{The control set $\CC$ of (\ref{4}) and the behavior of the solutions outside $\CC$.}
\label{fig3} \end{figure}

\subsection{The case $\alpha(a\alpha+b\beta)\neq0$}

In this section, we analyze the control sets under the LARC. As Theorem
\ref{main} states in this case we have the associated control set is unique
and has nonempty interior. We will divide the analysis in the following two sections.

\subsubsection{The case $b=0$}

In this situation, we necessarily have that $a\neq0$ and so $\psi(x, y):=(x,
a^{-1}y-\beta \alpha^{-1}(x-1))$ is a diffeomorphism that conjugates $\Sigma$
and the control system
\begin{equation}
\label{3}\left \{
\begin{array}
[c]{l}%
\dot{x}=u\alpha x\\
\dot{y}=x-1
\end{array}
\right. , \; \; \mbox{ where }\; \;u\in \Omega,
\end{equation}
whose solutions starting at $(x, y)\in G$ are given by concatenations of the
flows
\[
\varphi(t, (x, y), u)=\left( \mathrm{e}^{u\alpha t}x, \frac{(\mathrm{e}%
^{u\alpha t}-1)x}{u\alpha}-t+y\right) , \; \;t\in \mathbb{R}, \; \;u\neq0
\]
and
\[
\varphi(t, (x, y), 0)=\left( x, (x-1)t+y\right) , \; \;t\in \mathbb{R}, \; \;u=0.
\]

Before showing that the control system (\ref{3}) is controllable it is important to notice that in \cite{DaJo} the authors prove that $b=0$ and
the LARC are equivalent to the controllability of a linear control system. The
difference here is that we show explicitly ``the way´´such controllability is obtained. This is certainly worth since one can use it
in optimability problems concerning such systems.

Let then $(x_{1}, y_{1}), (x_{2}, y_{2})\in G$ and assume that $x_{1}<1<x_{2}%
$. It holds:

\begin{itemize}
\item[(i)] $(x_{2}, y_{2})\in \mathcal{O}^{+}(x_{1}, y_{1}):$

In fact, let $u\in \Omega$ with $u\alpha>0$. Since $\varphi_{1}(t, (x_{1},
y_{1}), u)=\mathrm{e}^{u\alpha t}x_{1}$ there exists $t_{1}>0$ such that
$\varphi_{1}(t_{1}, (x_{1}, y_{1}), u)=x_{2}$. By considering $y_{2}^{\prime
}:=\varphi_{2}(t_{1}, (x_{1}, y_{1}), u)$ we have that:

\begin{itemize}

\item[1.] Assume $y_{2}^{\prime}\leq y_{2}$. Since $x_{2}>1$ it follows that
$t_{2}:=\frac{y_{2}-y_{2}^{\prime}}{x_{2}-1}\geq0$. Consequently
\[
\varphi(t_{2}, (x_{2}, y_{2}^{\prime}), 0)=(x_{2}, (x_{2}-1)t_{2}%
+y_{2}^{\prime})=(x_{2}, y_{2})\; \; \implies \varphi(t_{2}, \varphi(t_{1},
(x_{1}, y_{1}), u), 0)=(x_{2}, y_{2}).
\]
Therefore $(x_{2}, y_{2})\in \mathcal{O}^{+}(x_{1}, y_{1})$; 

\item[2.] Assume $y_{2}^{\prime}>y_{2}$. Since $y\mapsto \frac{(\mathrm{e}%
^{u\alpha t_{1}}-1)x_{1}}{u\alpha}-t_{1}+y$ is strictly increasing, there
exists $y_{1}^{\prime}<y_{1}$ such that
\[
\varphi_{2}(t_{1}, (x_{1}, y_{1}^{\prime}), u)=\frac{(\mathrm{e}^{u\alpha
t_{1}}-1)x_{0}}{u\alpha}-t_{1}+y_{1}^{\prime}=y_{2}.
\]
Since $x_{1}<1$ we have that $t_{0}=\frac{y_{1}^{\prime}-y_{1}}{x_{1}-1}>0$
and hence
\[
\varphi(t_{0}, (x_{1}, y_{1}), 0)=(x_{1}, (x_{1}-1)t_{0}+y_{1})=(x_{1},
y_{1}^{\prime})\; \; \implies \; \; \varphi(t_{1}, \varphi(t_{0}, (x_{1}, y_{1}),
u), 0)=(x_{2}, y_{2})
\]
implying that $(x_{2}, y_{2})\in \mathcal{O}^{+}(x_{1}, y_{1}) $ (see Figure \ref{fig4}). 
\end{itemize}

\bigskip

\item[(ii)] $(x_{1}, y_{1})\in \mathcal{O}^{+}(x_{2}, y_{2}):$

Let $u\in \Omega$ with $u\alpha<0$. Since $\varphi_{1}(t, (x_{2}, y_{2}),
u)=\mathrm{e}^{u\alpha t}x$ there exists $t_{1}>0$ such that $\varphi
_{1}(t_{1}, (x_{2}, y_{2}), u)=x_{1}$. By considering $y_{1}^{\prime}%
:=\varphi_{2}(t_{1}, (x_{2}, y_{2}), u)$ we have that:

\begin{itemize}

\item[1.] Assume $y_{1}^{\prime}\geq y_{1}$. Since $x_{1}<1$ it follows that
$t_{2}:=\frac{y_{1}-y_{1}^{\prime}}{x_{1}-1}\geq0$. Consequently
\[
\varphi(t_{2}, (x_{1}, y_{1}^{\prime}), 0)=(x_{1}, (x_{1}-1)t_{2}%
+y_{1}^{\prime})=(x_{1} y_{1})\; \; \implies \varphi(t_{2}, \varphi(t_{1},
(x_{2}, y_{2}), u), 0)=(x_{1}, y_{1}).
\]
Therefore $(x_{1}, y_{1})\in \mathcal{O}^{+}(x_{2}, y_{2})$; 

\item[2.] Assume $y_{1}^{\prime}<y_{1}$. Since $y\mapsto \frac{(\mathrm{e}%
^{u\alpha t_{1}}-1)x_{2}}{u\alpha}-t_{1}+y$ is strictly increasing, there
exists $y_{2}^{\prime}>y_{2}$ such that
\[
\varphi_{2}(t_{0}, (x_{2}, y_{2}^{\prime}), u)=\frac{(\mathrm{e}^{u\alpha
t}-1)x_{2}}{u\alpha}-t+y_{2}^{\prime}=y_{1}.
\]
Since $x_{2}>1$ we have that $t_{0}=\frac{y_{2}^{\prime}-y_{2}}{x_{2}-1}>0$
and hence
\[
\varphi(t_{0}, (x_{2}, y_{2}), 0)=(x_{2}, (x_{2}-1)t_{0}+y_{2})=(x_{2},
y_{2}^{\prime})\; \; \implies \; \; \varphi(t_{1}, \varphi(t_{0}, (x_{2}, y_{2}),
0), u)=(x_{1}, y_{1})
\]
implying that $(x_{1}, y_{1})\in \mathcal{O}^{+}(x_{2}, y_{2}).$ 
\end{itemize}
\end{itemize}

\bigskip

\begin{figure}[h]
	\begin{center}
		\includegraphics[scale=1.2	]{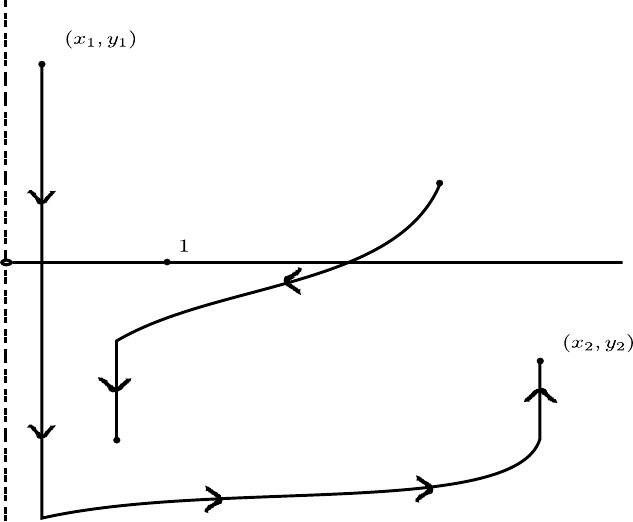}
	\end{center}
	\par
	\caption{Solutions of (\ref{4}) connecting distinct points.}
	\label{fig4} \end{figure}

Now we are able to prove a controllability result.

\begin{theorem}
If $b=0$ the only control set of $\Sigma$ is the whole space $G$. 
\end{theorem}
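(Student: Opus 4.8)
The plan is to establish controllability of the conjugated system (\ref{3}) and then transfer the conclusion back to $\Sigma$ via the conjugating diffeomorphism. Since the control sets of $\Sigma$ and (\ref{3}) correspond under $\psi$, it suffices to show that (\ref{3}) is controllable, i.e. that $\mathcal{O}^+(x_0,y_0)=G$ for every $(x_0,y_0)\in G$; equivalently, $G$ itself is a control set with nonempty interior, and by maximality it is then the unique one.

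The heart of the argument is already contained in the computations preceding the statement, which handle the ``generic'' case $x_1<1<x_2$: there items (i) and (ii) show that any two points straddling the vertical line $x=1$ are mutually reachable. First I would record that these two facts together say that points on opposite sides of $x=1$ lie in each other's positive orbits. The remaining task is to remove the hypothesis $x_1<1<x_2$, that is, to connect two arbitrary points of $G$ without assuming they lie on opposite sides of $x=1$. The natural device is a bridging point: given $(x_1,y_1),(x_2,y_2)\in G$, choose any auxiliary $(x_*,y_*)\in G$ with $x_*$ on the opposite side of $1$ from both. Applying the established reachability twice---from $(x_1,y_1)$ to the bridge and from the bridge to $(x_2,y_2)$, or the reverse---yields $(x_2,y_2)\in\mathcal{O}^+(x_1,y_1)$. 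Concatenation of controls keeps us within the admissible class $\mathcal{U}$, so the composite trajectory is legitimate.

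I would then assemble the conclusion: since every point is reachable from every other, $\mathcal{O}^+(x,y)=G$ for all $(x,y)\in G$, so $G$ is controlled invariant and approximate controllability holds trivially (indeed exact controllability holds). Because $G$ is open, it has nonempty interior, and any control set must be contained in it by maximality, forcing $\mathcal{C}=G$ to be the unique control set of (\ref{3}). Finally, pulling back by $\psi^{-1}$ (an automorphism, hence a diffeomorphism conjugating the systems as noted in Section~2) gives that the unique control set of $\Sigma$ is $\psi^{-1}(G)=G$, completing the proof.

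The step I expect to require the most care is the bridging argument, specifically verifying that one can always place the auxiliary point $(x_*,y_*)$ so that both reachability relations apply and the required switching times come out nonnegative. The preceding case analysis shows reachability is robust---in each subcase one adjusts either the $u=0$ drift phase or the initial $y$-value to hit the target---so I anticipate no genuine obstruction, only the bookkeeping of checking that the sign conditions ($x_*<1$ versus $x_*>1$, and the resulting $t_i\ge 0$) are simultaneously satisfiable for every configuration of source and target. This is routine given the explicit flow formulas, so I would state the bridging construction and invoke items (i) and (ii) rather than redoing the case distinctions.
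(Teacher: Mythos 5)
Your strategy is the paper's: conjugate to system (\ref{3}), reuse the reachability items (i) and (ii) established for $x_1<1<x_2$, handle the same-side case by crossing the line $x=1$, and conclude uniqueness by maximality. Your bridging device does work whenever $x_1\neq 1$ and $x_2\neq 1$ (reach a bridge on the opposite side by item (ii), return by item (i)), and it is essentially the same move as the paper's, which flows with $u\alpha<0$ across the line and then invokes (i)--(ii). The genuine gap is the vertical line $\{(1,y),\, y\in\mathbb{R}\}$, which your argument never addresses. Items (i) and (ii) require the \emph{strict} inequalities $x_1<1<x_2$ (in their case 2 one adjusts the $y$-coordinate using the drift $\dot{y}=x-1$, which vanishes identically on $x=1$, and the time $t_0=\frac{y_1'-y_1}{x_1-1}$ is undefined there), so a point with first coordinate $1$ has no ``opposite side'' and admits no bridge. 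Your claim that the sign conditions are ``simultaneously satisfiable for every configuration of source and target'' fails exactly for such points. Consequently $\mathcal{O}^+(x,y)=G$ for \emph{all} $(x,y)\in G$ is not established: nothing is proved about orbits of sources $(1,y)$, and nothing shows targets $(1,y)$ are reachable; your parenthetical ``indeed exact controllability holds'' is true but unproven, since hitting $x=1$ exactly with the prescribed $y$-coordinate requires a computation that (i)--(ii) do not supply. Since the claimed control set $G$ contains this line, property (ii) of the definition must be verified both from and towards these points.

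The paper closes precisely this hole with two inexpensive moves, and you need them (or a substitute). For targets on the line it settles for approximate reachability, which is all the control-set definition demands: having shown $G\setminus\{(1,y),\, y\in\mathbb{R}\}\subset\mathcal{O}^+(x,y)$ for every off-line $(x,y)$, it uses that $G\setminus\{(1,y),\, y\in\mathbb{R}\}$ is dense in $G$ to get $G\subset\operatorname{cl}(\mathcal{O}^+(x,y))$. For sources on the line it observes that for suitable $u\in\Omega$ and $t>0$ the solution leaves the line (since $\dot{x}=u\alpha x\neq 0$ for $u\neq 0$ and $x=1$), whence
\[
G\subset\operatorname{cl}(\mathcal{O}^+(\varphi(t,(1,y),u)))\subset\operatorname{cl}(\mathcal{O}^+(1,y)).
\]
With these two steps appended, your bridging argument becomes a complete proof; without them it only establishes controllability on $G\setminus\{(1,y),\, y\in\mathbb{R}\}$, which is not a control set containing $G$.
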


\begin{proof}
By conjugation it is enough to show that $G$ is the control set of the control
system (\ref{3}). Let us consider $(x_{1}, y_{1}), (x_{2}, y_{2})\in
G\setminus \{(1, y), y\in \mathbb{R}\}$. If $x_{1}>1$ and $x_{2}>1$ we can
consider $u\in \Omega$ with $u\alpha<0$ and, since $\mathrm{e}^{u\alpha t}%
x_{1}\rightarrow0$ as $t\rightarrow+\infty$ there exists $t_{0}>0$ such that
$x_{1}^{\prime}=\mathrm{e}^{u\alpha t_{0}}x_{1}<1$. By the above, it holds
that $(x_{2}, y_{2})\in \mathcal{O}^{+}(\varphi(t_{0}, (x_{1}, y_{1}), u))$ and
hence $(x_{2}, y_{2})\in \mathcal{O}^{+}(x_{1}, y_{1})$. An analogous analysis
for the case $x_{1}<1$ and $x_{2}<1$ gives us also $(x_{2}, y_{2}%
)\in \mathcal{O}^{+}(x_{1}, y_{1})$ and consequently
\[
G\setminus \{(1, y), y\in \mathbb{R}\} \subset \mathcal{O}^{+}(x, y),
\; \; \mbox{ for any }\; \;(x, y)\in G\setminus \{(1, y), y\in \mathbb{R}\}.
\]
Since $G\setminus \{(1, y), y\in \mathbb{R}\}$ is certainly dense in $G$ we get
that $G\subset \operatorname{cl}(\mathcal{O}^{+}(x, y))$ for any $(x, y)\in
G\setminus \{(1, y), y\in \mathbb{R}\}$. On the other hand, for any $(x, y)\in
G$ there exists $u\in \Omega$ and $t>0$ such that $\varphi(t, (x, y), u)\in
G\setminus \{(1, y), y\in \mathbb{R}\}$. Finally,
\[
G\subset \operatorname{cl}(\mathcal{O}^{+}(\varphi(t, (x, y), u)))\subset
\operatorname{cl}(\mathcal{O}^{+}(x, y))\subset G
\]
implying that $G$ is the only control set of (\ref{3}) and concluding the
proof. 
\end{proof}

\subsubsection{The case $b\neq0$}

Since the sign of $b$ in not relevant for the proof, we only consider the case
$b<0$. By considering the diffeomorphism of $G$ defined by $\psi(x, y):=(x,
\gamma^{-1}(a(x-1)+by))$, where $\gamma=a\alpha+b\beta \neq0$, it follows that
$\Sigma$ is conjugated to the control system
\begin{equation}
\label{system1}\left \{
\begin{array}
[c]{l}%
\dot{x}=u\alpha x\\
\dot{y}=by+ux
\end{array}
\right. , \; \; \mbox{ where }\; \;u\in \Omega.
\end{equation}
The solutions starting at $(x, y)\in G$ of (\ref{system1}) are given by
concatenations of the flows
\begin{equation}
\label{constant}%
\begin{array}
[c]{c}%
\varphi(t, (x, y), u)=(\mathrm{e}^{u\alpha t}x, m_{u}(\mathrm{e}^{u\alpha
t}-\mathrm{e}^{bt})x+\mathrm{e}^{bt}y), \; \; \mbox{ for }u\alpha \neq
b\; \; \mbox{ and }\; \;m_{u}=\frac{u}{u\alpha-b}\\
\hspace{2cm}\\
\mbox{ and }\; \; \; \varphi(t, (x, y), b\alpha^{-1})=\left( \mathrm{e}^{bt}x,
\mathrm{e}^{tb}(y+tb\alpha^{-1} x)\right) , \; \;t\in \mathbb{R},
\; \; \mbox{ when }\;u\alpha=b.
\end{array}
\end{equation}
For any $u\in \Omega$ with $u\alpha \neq b$ we denote by $r_{u}$ the \emph{ray}
of $G$ given by
\[
r_{u}:=\{(x, y)\in G;\; \;y-m_{u}x=0\},
\]
that is, $r_{u}$ is the intersection with $G$ of the line by the origin of
$\mathbb{R}^{2}$ with inclination $m_{u}$.

Define $m:\mathbb{R}\setminus \{b\alpha^{-1}\} \rightarrow \mathbb{R}$ to be the
map given by $u\mapsto m_{u}$. It is straightforward to see that (see Figure \ref{fig6})

\begin{itemize}

\item[1.] $m_{u}=\frac{-b}{(u\alpha-b)^{2}}>0$ and so, $m$ is strictly
increasing on $(-\infty, b\alpha^{-1})$ and on $(b\alpha^{-1}, +\infty)$; 

\item[2.] $\lim_{u\rightarrow \pm \infty}m_{u}=\alpha^{-1}$ and $\lim
_{u\rightarrow(b\alpha^{-1})^{\pm}}m_{u}=\mp \infty$. 
\end{itemize}

\begin{figure}[h]
\begin{center}
\includegraphics[scale=1.3]{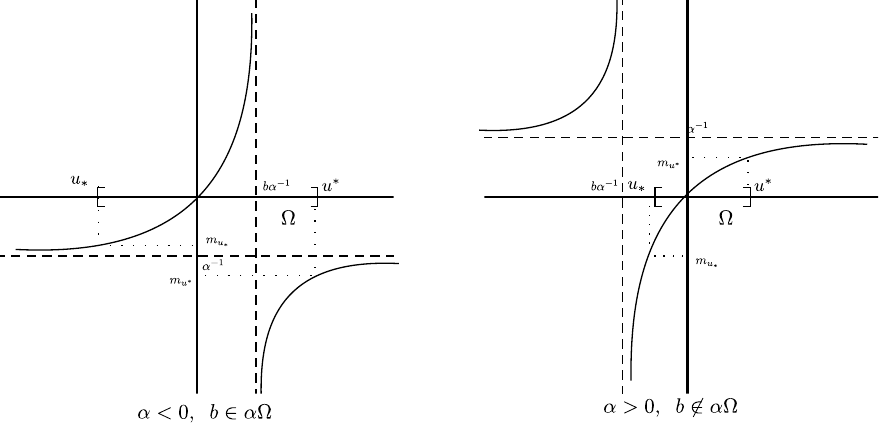}
\end{center}
\par
\caption{Behavior of $m_u$.}
\label{fig6} \end{figure}

Let us consider $B:=\{u\in \Omega; \;u\alpha-b>0\}$. Since we are assuming
$b<0$ and $0\in \mathrm{int}\Omega$ we necessarily have that $u\alpha-b>0$ for
some $u\in \Omega$ and consequently $B\neq \emptyset$. For the solutions of the
above control system we have the following:

\begin{proposition}
\label{solutions}  For any $t>0$ and $u\in \Omega$ it holds: 

\begin{itemize}

\item[1.] $\varphi_{t, u}(r_{u})\subset r_{u}$; 

\item[2.] $\varphi(t, (x, y_{1}+y_{2}), u)=\varphi(t, (x, y_{1}), u)+(0,
\mathrm{e}^{bt}y_{2})$; 

\item[3.] For $u\alpha \neq0$ we obtain
\[
m_{u_{*}}\varphi_{1}(t, (x, y), u)-\varphi_{2}(t, (x, y), u)\leq
\mathrm{e}^{bt}(m_{u_{*}}x-y)\; \; \mbox{ if }\; \;u_{*}\in B;
\]
\[
\varphi_{1}(t, (x, y), u)-m_{u^{*}}\varphi_{1}(t, (x, y), u)\leq
\mathrm{e}^{bt}(y-m_{u^{*}}x)\; \; \mbox{ if }\; \;u^{*}\in B.
\]

\item[4.] For $u=b\alpha^{-1}$ we get
\[
m_{u_{*}}\varphi_{1}(t, (x, y), u)-\varphi_{2}(t, (x, y), u)\leq
\mathrm{e}^{bt}(m_{u_{*}}x-y)\; \; \mbox{ if }\; \; \alpha<0,\; \; \mbox{ and }\; \;
\]
\[
\varphi_{1}(t, (x, y), u)-m_{u^{*}}\varphi_{1}(t, (x, y), u)\leq
\mathrm{e}^{bt}(y-m_{u^{*}}x)\; \; \mbox{ if }\; \; \alpha>0.
\]

\end{itemize}
\end{proposition}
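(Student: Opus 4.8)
The plan is to treat the four items in increasing order of difficulty, getting items 1 and 2 for free from the explicit formula (\ref{constant}) and reserving the real work for the sign inequalities in item 3. For item 1, a point of $r_u$ satisfies $y=m_ux$; substituting this into the second coordinate in (\ref{constant}) makes the $\mathrm{e}^{bt}$-terms cancel and leaves $\varphi_2=m_u\mathrm{e}^{u\alpha t}x=m_u\varphi_1$, so the image again lies on $r_u$, and it stays in $G$ because $\varphi_1=\mathrm{e}^{u\alpha t}x>0$. Item 2 is immediate from the same formula: $\varphi_1$ is independent of the second coordinate and $\varphi_2$ is affine in $y$ with slope $\mathrm{e}^{bt}$, so replacing $y$ by $y_1+y_2$ adds exactly $(0,\mathrm{e}^{bt}y_2)$; the resonant formula for $u=b\alpha^{-1}$ has the identical affine dependence on $y$.

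For item 3 I would fix $u$ with $u\alpha\neq b$ and reduce the inequality to a pure sign condition. A direct rearrangement of (\ref{constant}) gives the identity
\[
m_{u_*}\varphi_1(t,(x,y),u)-\varphi_2(t,(x,y),u)=\mathrm{e}^{bt}(m_{u_*}x-y)+x(m_{u_*}-m_u)\left(\mathrm{e}^{u\alpha t}-\mathrm{e}^{bt}\right),
\]
so the claimed inequality is equivalent, since $x>0$, to $(m_{u_*}-m_u)(\mathrm{e}^{u\alpha t}-\mathrm{e}^{bt})\le0$. As $t>0$ and the exponential is increasing, $\mathrm{e}^{u\alpha t}-\mathrm{e}^{bt}$ has the sign of $u\alpha-b$, so everything reduces to proving $(m_{u_*}-m_u)(u\alpha-b)\le0$ for all $u\in\Omega$, under the hypothesis $u_*\in B$.

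This sign condition is where the properties 1 and 2 of $m$ enter, through a case split on $\alpha$. If $\alpha>0$, then $u_*\in B$ forces $u_*>b\alpha^{-1}$, and since $u_*=\min\Omega$ the whole of $\Omega$ sits on the branch $(b\alpha^{-1},+\infty)$ where $m$ is increasing; there $m_{u_*}\le m_u$ while $u\alpha-b>0$, so the product is $\le0$. If $\alpha<0$, then $u_*\in B$ holds automatically ($b\alpha^{-1}>0>u_*$): for $u\in B$ the controls $u_*,u$ share the branch $(-\infty,b\alpha^{-1})$, giving $m_{u_*}\le m_u$ with $u\alpha-b>0$; for $u\notin B$ they lie on opposite branches, and the limit value $\alpha^{-1}$ separates them, $m_{u_*}>\alpha^{-1}>m_u$, so $m_{u_*}-m_u>0$ while $u\alpha-b<0$ — in every subcase the product is $\le0$. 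The $u^*$-statement is symmetric, reducing to $(m_{u^*}-m_u)(u\alpha-b)\ge0$ and dispatched by the same monotonicity-and-limit bookkeeping.

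Item 4 is then a short substitution: plugging $\varphi(t,(x,y),b\alpha^{-1})=(\mathrm{e}^{bt}x,\mathrm{e}^{bt}(y+tb\alpha^{-1}x))$ into $m_{u_*}\varphi_1-\varphi_2$ yields $\mathrm{e}^{bt}(m_{u_*}x-y)-\mathrm{e}^{bt}tb\alpha^{-1}x$, so the inequality amounts to $\mathrm{e}^{bt}tb\alpha^{-1}x\ge0$, which holds exactly when $b\alpha^{-1}\ge0$, i.e. (as $b<0$) when $\alpha<0$; the $u^*$-version reduces to $b\alpha^{-1}\le0$, i.e. $\alpha>0$, matching the stated hypotheses, which are precisely $u_*\in B$ and $u^*\in B$ in this resonant case. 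The \textbf{main obstacle} I anticipate is the opposite-branch subcase of item 3: when the applied control $u$ and the endpoint $u_*$ (resp. $u^*$) straddle the pole $b\alpha^{-1}$, the ordering of $m_{u_*}$ and $m_u$ cannot come from monotonicity alone and must instead be read off from the horizontal asymptote $\alpha^{-1}$, so the argument genuinely relies on both the monotonicity and the limiting behavior of $m$.
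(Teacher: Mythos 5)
Your proof is correct and follows essentially the same route as the paper: items 1 and 2 by direct substitution into the explicit flows, item 3 via the rearrangement that reduces the inequality to the sign of $(m_{u_*}-m_u)\left(\mathrm{e}^{u\alpha t}-\mathrm{e}^{bt}\right)$, and item 4 by plugging in the resonant flow. If anything, your case analysis on $\alpha$ and the two branches of $m$ (using the asymptote $\alpha^{-1}$) spells out the orderings $m_{u_*}\leq m_u$ when $u\alpha-b>0$ and $m_{u_*}>m_u$ when $u\alpha-b<0$, which the paper's proof merely asserts.
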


\begin{proof}
Since a general solution of (\ref{system1}) is given by concatenations of the
flows in (\ref{constant}) it is enough to show the proposition for $u\in
\Omega$.

1. In fact, if $(x, y)\in r_{u}$ we have that $y=m_{u}x$ and for any $(x,
y)\in r_{u}$
\[
\varphi(t, (x, y), u)=(\mathrm{e}^{\alpha ut}x, m_{u}(\mathrm{e}^{u\alpha
t}-\mathrm{e}^{bt})x+\mathrm{e}^{bt}m_{u}x)=(\mathrm{e}^{\alpha ut}x,
\mathrm{e}^{\alpha ut}m_{u}x)=\mathrm{e}^{\alpha ut}(x, y).
\]

2. In fact, if $u\alpha \neq b$ we get
\[
\varphi(t, (x, y_{1}+y_{2}), u)=(\mathrm{e}^{u\alpha t}x, m_{u}(\mathrm{e}%
^{u\alpha t}-\mathrm{e}^{bt})x+\mathrm{e}^{bt}(y_{1}+y_{2}))
\]
\[
=(\mathrm{e}^{u\alpha t}x, m_{u}(\mathrm{e}^{u\alpha t}-\mathrm{e}%
^{bt})x+\mathrm{e}^{bt}y_{1})+(0, \mathrm{e}^{bt}y_{2})=\varphi(t, (x, y_{1}),
u)+(0, \mathrm{e}^{bt}y_{2})
\]
and for $u\alpha=b$
\[
\varphi(t, (x, y_{1}+y_{2}), b\alpha^{-1})=\left( \mathrm{e}^{bt}x,
\mathrm{e}^{tb}(y_{1}+y_{2}+tb\alpha^{-1} x)\right)
\]
\[
\left( \mathrm{e}^{bt}x, \mathrm{e}^{tb}(y_{1}+tb\alpha^{-1} x)\right) +(0,
\mathrm{e}^{tb}y_{2})=\varphi(t, (x, y_{1}), b\alpha^{-1})+(0, \mathrm{e}%
^{bt}y_{2}).
\]

3. Let us analyze the case for $u_{*}\in B$. We have that
\[
m_{u_{*}}\varphi_{1}(t, (x, y), u)-\varphi_{2}(t, (x, y), u)=m_{u_{*}%
}\mathrm{e}^{u\alpha t}x-m_{u}\mathrm{e}^{u\alpha t}x+m_{u}\mathrm{e}%
^{bt}x-\mathrm{e}^{bt}y=(m_{u_{*}}-m_{u})\mathrm{e}^{u\alpha t}x+\mathrm{e}%
^{bt}x-\mathrm{e}^{bt}y
\]

However, if $u\alpha-b>0$ then $m_{u_{*}}\leq m_{u}$ implying that $(m_{u_{*}%
}-m_{u})\mathrm{e}^{u\alpha t}\leq(m_{u_{*}}-m_{u})\mathrm{e}^{bt}$. If
$u\alpha-b<0$ then $m_{u_{*}}>m_{u}$ and consequently $(m_{u_{*}}%
-m_{u})\mathrm{e}^{u\alpha t}\leq(m_{u_{*}}-m_{u})\mathrm{e}^{bt}$.
Therefore,
\[
m_{u_{*}}\varphi_{1}(t, (x, y), u)-\varphi_{2}(t, (x, y), u)\leq(m_{u_{*}%
}-m_{u})\mathrm{e}^{bt}+m_{u}\mathrm{e}^{bt}x-\mathrm{e}^{bt}y=\mathrm{e}%
^{bt}(m_{u_{*}}x-y), \; \;u\alpha-b\neq0.
\]

4. If $\alpha<0$ we obtain $b\alpha^{-1}>0$ and hence
\[
m_{u_{*}}\varphi_{1}(t, (x, y), b\alpha^{-1})-\varphi_{2}(t, (x, y),
b\alpha^{-1})=m_{u_{*}}\mathrm{e}^{bt}x-\mathrm{e}^{bt}(y+tb\alpha^{-1}%
x)\leq \mathrm{e}^{bt}(m_{u_{*}}x-y).
\]

\end{proof}

\begin{remark}
By concatenations, itens 2 to 4 are also valid for any piecewise constant
function $u\in \mathcal{U}$. 
\end{remark}

Let us consider the set
\[
\mathcal{C}:=\bigcup_{u\in B}r_{u}.
\]
Our aim is to show that $\mathcal{C}$ is in fact the only control set of
(\ref{system1}). In order to that the next lemma, concerning the main
properties of $\mathcal{C}$, will be central.

\begin{lemma}
\label{lemma}  Let $u\in \Omega$. It holds: 

\begin{itemize}

\item[1.] If $b\notin \alpha \Omega$, then
\[
\mathcal{C}=\{(x, y)\in G; \; \; m_{u_{*}}x\leq y \leq m_{u^{*}}x \};
\]

\item[2.] If $b\in \alpha \Omega$, then
\[
\mathcal{C}=\{(x, y)\in G; \; \;y\leq m_{u^{*}}x\} \; \; \mbox{ if }\; \; \alpha
>0\; \; \mbox{ and }
\]
\[
\mathcal{C}=\{(x, y)\in G; \; \;y\geq m_{u_{*}}x\} \; \; \mbox{ if }\; \; \alpha<0
\]

\item[3.] For any $u_{1}, u_{2}\in \mathrm{int}B$ with $u_{1}\neq u_{2}$ there
exists $t_{0} >0$ and $u\in \Omega$ such that
\[
\varphi(t_{0}, r_{u_{1}}, u)=r_{u_{2}};
\]

\item[4.] The subset $\mathcal{C}$ is positively-invariant; 
\end{itemize}
\end{lemma}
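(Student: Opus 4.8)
The proof splits into the geometric description of $\mathcal{C}$ (items 1 and 2), the transitivity-type statement (item 3), and the invariance (item 4), and each rests on the behaviour of the slope map $m$. For items 1 and 2 the plan is to read off the set of slopes $\{m_u : u\in B\}$ from the monotonicity and the limits of $m$ recorded just before the lemma, and then to use that a union of rays through the origin is exactly the cone cut out by its extreme slopes. First I would determine $B$ explicitly by solving $u\alpha - b>0$: when $\alpha>0$ this is $u>b\alpha^{-1}$ and when $\alpha<0$ it is $u<b\alpha^{-1}$, the pole $b\alpha^{-1}$ having the sign opposite to $\alpha$. The hypothesis $b\notin\alpha\Omega$ means $b\alpha^{-1}\notin\Omega$, forcing $B=\Omega$; then $\Omega$ sits in a single monotonicity branch of $m$, so $m(B)=[m_{u_*},m_{u^*}]$ and $\mathcal{C}$ is the closed cone $\{m_{u_*}x\le y\le m_{u^*}x\}$, which is item 1. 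The hypothesis $b\in\alpha\Omega$ places the pole inside $\Omega$, so $B$ is the half-open subinterval of $\Omega$ on which $u\alpha-b>0$; using $\lim_{u\to(b\alpha^{-1})^{\pm}}m_u=\mp\infty$, the image $m(B)$ is a half-line of slopes, namely $(-\infty,m_{u^*}]$ when $\alpha>0$ and $[m_{u_*},+\infty)$ when $\alpha<0$, giving the two half-plane descriptions of item 2.

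For item 3 I would first compute the action of a single flow on a ray. Substituting $(x,m_{u_1}x)$ into (\ref{constant}) shows that $\varphi_{t,u}$ sends $r_{u_1}$ onto the ray of slope $m_u+\mathrm{e}^{(b-u\alpha)t}(m_{u_1}-m_u)$, where surjectivity onto the whole ray follows because the first coordinate $\mathrm{e}^{u\alpha t}x$ still sweeps all of $\mathbb{R}_+$. Fixing $u\in B$, so that $b-u\alpha<0$, this slope moves continuously and monotonically from $m_{u_1}$ at $t=0$ toward $m_u$ as $t\to+\infty$. Since $u_1,u_2\in\mathrm{int}\,B$ and $m$ is strictly monotone, $m_{u_2}$ is interior to the slope range, so by items 1 and 2 I can choose $u\in B$ with $m_u$ lying strictly beyond $m_{u_2}$ on the side away from $m_{u_1}$; the intermediate value theorem then produces $t_0>0$ with image slope exactly $m_{u_2}$, that is, $\varphi(t_0,r_{u_1},u)=r_{u_2}$.

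Item 4 is where Proposition \ref{solutions} does the work. Using the description of $\mathcal{C}$ from items 1 and 2, membership in $\mathcal{C}$ is encoded by the sign conditions $y-m_{u_*}x\ge 0$ and/or $y-m_{u^*}x\le 0$. For $b\notin\alpha\Omega$ every control satisfies $u\alpha\ne b$ and both $u_*,u^*\in B$, so applying both inequalities of Proposition \ref{solutions}(3), together with $\mathrm{e}^{bt}>0$, keeps $\varphi$ between the two extreme rays. For $b\in\alpha\Omega$ only one extreme ray bounds $\mathcal{C}$ and its endpoint still lies in $B$; the single relevant inequality of Proposition \ref{solutions}(3) handles all controls with $u\alpha\ne b$, while the borderline control $u=b\alpha^{-1}$ is covered precisely by Proposition \ref{solutions}(4), the sign split on $\alpha$ matching the two cases of item 2. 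In every case the right-hand side is nonpositive, so the bounding inequality persists for all $t>0$ and $\varphi(t,(x,y),u)\in\mathcal{C}$.

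I expect the main obstacle to be the bookkeeping in items 1 and 2: correctly pinning down $B$, identifying which branch of $m$ is relevant, and evaluating the one-sided limits of $m$ at the pole $b\alpha^{-1}$, all as functions of the sign of $\alpha$ and of the position of $b\alpha^{-1}$ relative to $\Omega$. Once the slope ranges are settled, items 3 and 4 are short: item 3 is a one-dimensional intermediate-value argument, and item 4 is a direct application of the already-established estimates in Proposition \ref{solutions}.
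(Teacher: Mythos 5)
Your proof is correct, and items 1--3 take essentially the paper's own route: the same identification of $B$ from the sign of $u\alpha-b$ (with $B=\Omega$ exactly when $b\notin\alpha\Omega$), the same use of the monotonicity and one-sided limits of $m$ together with the intermediate value theorem to get the cone/half-plane descriptions, and, for item 3, the very same slope function $g_u(t)=m_u+\mathrm{e}^{-t(u\alpha-b)}(m_{u_1}-m_u)$ with homogeneity of the flow upgrading a nonempty intersection of $\varphi(t_0,r_{u_1},u)$ with $r_{u_2}$ to equality of rays. Item 4 is where you genuinely depart from the paper. The paper does not invoke Proposition \ref{solutions} there: it takes $(x,y)\in\operatorname{int}\mathcal{C}$ lying on a ray $r_u$ with $u\in\operatorname{int}B$, notes via the proof of item 3 that controls in $B$ preserve $\operatorname{int}\mathcal{C}$, treats the controls in $\Omega\setminus B$ (which exist only when $b\in\alpha\Omega$) by the sign of $g_u'(t)$ and, for the pole control $u=b\alpha^{-1}$, by the explicit slope $y/x+tb\alpha^{-1}$, and finally extends to all of $\mathcal{C}$ by density of the interior and to all of $\mathcal{U}$ by concatenation. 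You instead apply the inequalities of Proposition \ref{solutions}(3)--(4) directly at an arbitrary point of $\mathcal{C}$: by items 1--2, membership in $\mathcal{C}$ is exactly nonpositivity of $m_{u_*}x-y$ and/or $y-m_{u^*}x$, the relevant endpoint ($u_*$ when $\alpha<0$ or $b\notin\alpha\Omega$, $u^*$ when $\alpha>0$ or $b\notin\alpha\Omega$) always lies in $B$, so the right-hand sides $\mathrm{e}^{bt}(\cdots)$ are nonpositive and the defining inequalities persist along the flow, the pole control being covered by item 4 of the proposition. This is cleaner than the paper's argument: it needs neither the interior-point/density step nor the separate derivative computation, and it treats every constant control uniformly, with concatenation (the paper's Remark after Proposition \ref{solutions}) handling general piecewise constant controls; in the paper those inequalities are reserved for the uniqueness proof of the theorem, but, as you observe, they already encode positive invariance. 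One caveat: Proposition \ref{solutions}(3) is literally stated ``for $u\alpha\neq 0$''; your reading of it as ``$u\alpha\neq b$'' is what its proof actually establishes (item 4 covering $u\alpha=b$), and your case bookkeeping relies on that corrected reading.
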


\begin{proof} 1. Since we are assuming that $b\notin \alpha \Omega$ it holds that
$\Omega \subset(-\infty, b\alpha^{-1})$ or $\Omega \subset(b\alpha^{-1},
+\infty)$. Being that $\Omega=[u_{*}, u^{*}]$ and $m$ is strictly crescent we
get
\[
m_{u_{*}}\leq m_{u}\leq m_{u^{*}} \; \; \mbox{ for all }\; \;u\in \Omega.
\]
Therefore, if $(x, y)\in \mathcal{C}$, it turns out $y=m_{u}x$ for some
$u\in \Omega$ implying that $m_{u_{*}}x\leq y\leq m_{u^{*}}x$. On the other
hand, if $(x, y)\in G$ with $m_{u_{*}}x\leq y\leq m_{u^{*}}x$ then
$y/x\in[m_{u_{*}}, m_{u^{*}}]$ which by continuity implies the existence of
$u\in \Omega$ such that $m_{u}=y/x$ implying that $(x, y)\in r_{u}$ and
concluding the proof.

2. Let us show the case $\alpha<0$ since the other case is analogous. A simple
calculation shows that
\[
B=\Omega \cap(-\infty, b\alpha^{-1})=[u_{*}, b\alpha^{-1}),
\; \; \mbox{ with }\; \;b\alpha^{-1}>0
\]
and consequently
\[
m_{u_{*}}=\inf_{u\in B}m_{u}<0\; \; \mbox{ and }\; \; \sup_{u\in B}m_{u}=+\infty.
\]

If $(x, y)\in \mathcal{C}$ there exists $u\in B$ such that $y=m_{u}x\geq
m_{u^{*}}x$ showing that $\mathcal{C}\subset \{(x, y)\in G; \; \;y\geq m_{u_{*}%
}x\}$. On the other hand, for any $(x, y)$ such that $y/x\geq m_{u^{*}}$ we
have that $y/x\in[m_{u_{*}}, +\infty)=m(B)$. So, there exists $u\in B$ with
$y/x=m_{u}$ implying that $(x, y)\in \mathcal{C}$ and as stated
\[
\mathcal{C}=\{(x, y)\in G; \; \;y\geq m_{u_{*}}x\}.
\]

3. Since $\varphi(t, \lambda(x, y), u)=\lambda \varphi(t, (x, y), u)$ for any
$(x, y)\in G$, $\lambda \in \mathbb{R}^{*}_{+}$ and $u\in \Omega$, it is enough
to show that $\varphi(t_{0}, r_{u_{1}}, u)\cap r_{u_{2}}\neq \emptyset$ for
some $t_{0}>0$ and $u\in \Omega$.

Let then $(x, y)\in r_{u_{1}}$, $u\in B$ and consider the continuous map
$g_{u}:\mathbb{R}\rightarrow \mathbb{R}$ given by
\[
t\in \mathbb{R}\mapsto g_{u}(t):=\frac{\varphi_{2}(t, (x, y), u)}{\varphi
_{1}(t, (x, y), u)}.
\]
A simple calculation shows that
\[
g_{u}(t)=m_{u}+\mathrm{e}^{-t(u\alpha-b)}\left( m_{u_{1}}-m_{u}\right) ,
\; \; \mbox{ since }\; \;m_{u_{1}}=\frac{y}{x}.
\]
Consequently,
\begin{equation}
\label{1}g(0)=m_{u_{1}} \; \; \; \mbox{ and }\; \; \;g(t)\rightarrow m_{u}
\; \; \mbox{ as }\; \;t\rightarrow+\infty
\end{equation}
because $u\alpha-b>0$. Thus, if $u_{1}, u_{2}\in \mathrm{int}B$ there exists
$u\in B$ such that $m_{u_{2}}\in(m_{u_{1}}, m_{u})$ or $m_{u_{2}}\in(m_{u},
m_{u_{1}})$, depending if $m_{u_{1}}$ is greater or smaller than $m_{u_{2}}$.
By the continuity of $g_{u}$ and (\ref{1}) there exists $t_{0}>0$ such that
$g_{u}(t_{0})=m_{u_{2}}$ and
\[
\varphi_{2}(t_{0}, (x, y), u)=m_{u_{2}}\varphi_{1}(t_{0}, (x, y),
u)\implies \varphi(t_{0}, (x, y), u)\in r_{u_{2}}%
\]
which concludes the proof.

4. Let $(x, y)\in \mathrm{int}\mathcal{C}$. By the proof of items 1. and 2.
above, there exists $u\in \mathrm{int}B$ such that $(x, y)\in r_{u}$. Moreover,
by the proof of item 3. $\varphi(t, (x, y), v)\in \mathrm{int}\mathcal{C}$ for
any $t>0$ and $v\in B$.

Therefore, it is enough to assume that $b\in \alpha \Omega$ and show that
$\varphi(t, (x, y), u)\in \mathcal{C}$ for any $u\in \Omega \setminus B$.

Let us analyze the case where $\alpha<0$. In this situation, $B=[u_{*},
b\alpha^{-1})$ and consequently, we only have to show that $\varphi(t, (x, y),
u)\in \mathcal{C}$ for any $u\in[b\alpha^{-1}, u^{*}]$.

However, by the properties of $m$ we know that $m_{u^{*}}>\alpha^{-1}>m_{u}$
for any $u\in(b\alpha^{-1}, u^{*}]$. According to the definition of $g_{u}$ in
item 3. we obtain $g_{u}(0)=y/x\geq m_{u_{*}}$ since $(x, y)\in \mathrm{int}%
\mathcal{C}$ and
\[
g^{\prime}_{u}(t)=-(u\alpha-b)\mathrm{e}^{-t(u\alpha-b)}(y/x-m_{u})>0,
\; \; \mbox{ if }\; \;u\in(b/\alpha, u^{*}].
\]
Therefore, $g([0, +\infty)\subset[m_{u_{*}}, +\infty)$ showing that
$\varphi(t, (x, y), u)\in \mathcal{C}$ for any $t>0$ and any $u\in(b\alpha
^{-1}, u^{*}]$. On the other hand, if $u\alpha=b$ we have that
\[
\frac{\varphi_{2}(t, (x, y), b\alpha^{-1})}{\varphi_{1}(t, (x, y),
b\alpha^{-1})}=\frac{\mathrm{e}^{bt}(y+t b\alpha^{-1}x)}{\mathrm{e}^{bt}%
x}=y/x+t b\alpha^{-1}>y/x\geq m_{u^{*}}%
\]
implying that $\varphi(t, (x, y), b\alpha^{-1})\in \mathcal{C}$ and
consequently that
\[
\varphi(t, (x, y), u)\in \mathcal{C}, \; \; \mbox{ for any }\; \;t>0
\; \mbox{ and }\;u\in \Omega.
\]
Since $\mathrm{int}\mathcal{C}$ is dense in $\mathcal{C}$ we get that
$\varphi(t, \mathcal{C}, u)\subset \mathcal{C}$ for any $t>0$ and $u\in \Omega$.
Since the solutions of the control system are given by concatenations of the
above flows, we get that $\mathcal{C}$ is positively-invariant as stated. 
\end{proof}

\begin{remark}
	Let us notice that itens 1. and 2. of Lemma \ref{lemma} shows that $\CC$ is a cone in $G$ with (open) wedge on $(0, 0)\in\R^2$ (see Figure \ref{fig5} below).
\end{remark}

\begin{figure}[h]
\begin{center}
\includegraphics[scale=1]{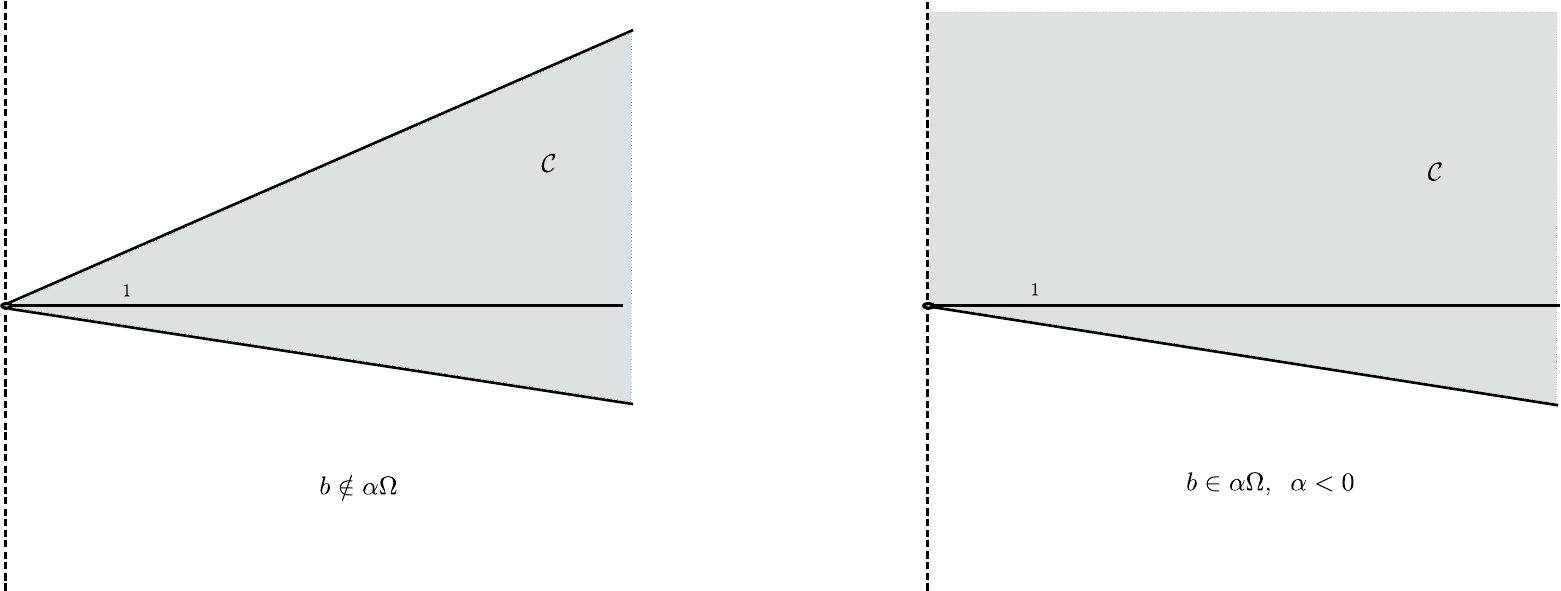}
\end{center}
\par
\caption{The possibilities for the control set of (\ref{system1}).}
\label{fig5} \end{figure}

\bigskip

We are now able to prove the main result of this section.

\begin{theorem}
If $b<0$ the unique control set of (\ref{system1}) is $\mathcal{C}$. 
\end{theorem}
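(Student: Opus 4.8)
The plan is to verify the two defining properties of a control set for $\mathcal{C}$ and, at the same time, to establish its maximality, by assembling the structural facts already proved. Controlled invariance is immediate from Lemma \ref{lemma}(4): since $\varphi(t,\mathcal{C},u)\subset\mathcal{C}$ for every $t>0$ and every $u\in\Omega$, fixing any single constant control keeps the whole forward trajectory of any point of $\mathcal{C}$ inside $\mathcal{C}$. The real content is therefore to prove approximate controllability on $\mathcal{C}$ and then to exclude any strictly larger invariant, approximately controllable set.

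For approximate controllability I would fix $p\in\operatorname{int}\mathcal{C}$, so that $p\in r_{u_1}$ for some $u_1\in\operatorname{int}B$, and show $\operatorname{int}\mathcal{C}\subset\mathcal{O}^+(p)$. By Proposition \ref{solutions}(1) each ray $r_u$ with $u\in B$ is invariant under the constant control $u$, along which the flow acts by the scaling $(x,y)\mapsto\mathrm{e}^{\alpha u t}(x,y)$, while Lemma \ref{lemma}(3) connects any two rays of $\operatorname{int}\mathcal{C}$ in finite time; thus $\mathcal{O}^+(p)$ meets every ray $r_{u_2}$ with $u_2\in\operatorname{int}B$. To reach every point of a fixed target ray I would use that $0\in\operatorname{int}\Omega$ forces $0\in\operatorname{int}B$, so $\operatorname{int}B$ contains controls $u_+$ and $u_-$ with $\alpha u_+>0$ and $\alpha u_-<0$. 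A ``scale-out shuttle'' (flow $r_{u_2}\to r_{u_+}$, scale on $r_{u_+}$ by $\mathrm{e}^{\alpha u_+ s}$ for $s\geq 0$, flow back $r_{u_+}\to r_{u_2}$) realizes every radial factor in $[C_+,\infty)$, and the symmetric shuttle through $u_-$ realizes every factor in $(0,C_-]$; composing the two and using that the attainable factors form a multiplicative semigroup, one gets all of $(0,\infty)$. Hence every point of $r_{u_2}$ lies in $\mathcal{O}^+(p)$, so $\operatorname{int}\mathcal{C}\subset\mathcal{O}^+(p)$, and by density of $\operatorname{int}\mathcal{C}$ in $\mathcal{C}$ together with continuity, $\mathcal{C}\subset\operatorname{cl}\mathcal{O}^+(p)$ for every $p\in\mathcal{C}$. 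As $\mathcal{C}$ has nonempty interior, Proposition 3.2.4 of \cite{CK} places $\mathcal{C}$ inside a control set $\mathcal{D}$.

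It remains to prove $\mathcal{D}=\mathcal{C}$, which simultaneously yields uniqueness. Here I would invoke the contraction estimates of Proposition \ref{solutions}(3)--(4): writing $h(x,y):=y-m_{u^*}x$ for the (scaled) signed height above the boundary ray bounding $\mathcal{C}$, these give $h(\varphi(t,q,u))\leq\mathrm{e}^{bt}\,h(q)$ for all $t>0$ and all $u\in\mathcal{U}$ (and the symmetric estimate with $m_{u_*}$ on the opposite boundary). By Lemma \ref{lemma}(1)--(2), a point $p\notin\mathcal{C}$ satisfies $h(p)>0$ (or the mirror condition). If such a $p$ belonged to a control set, controlled invariance would supply a point $q$ of that set with $q\in\mathcal{O}^+(p)$ reached in positive time $T$, whence $h(q)\leq\mathrm{e}^{bT}h(p)<h(p)$ since $b<0$; but approximate controllability would then force $p\in\operatorname{cl}\mathcal{O}^+(q)$, which is impossible because every point of $\mathcal{O}^+(q)$ has $h\leq h(q)<h(p)$ and $h$ is continuous. (No degenerate one-point control set can lie outside $\mathcal{C}$ either, since the only rest points satisfy $u=0$, $y=0$, i.e. they lie on $r_0\subset\mathcal{C}$.) Therefore every control set is contained in $\mathcal{C}$; in particular $\mathcal{D}\subset\mathcal{C}$, so $\mathcal{D}=\mathcal{C}$, and any further control set, being contained in the control set $\mathcal{C}$ and maximal, must coincide with $\mathcal{C}$.

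The main obstacle is the radius-filling step in the second paragraph. Reaching every ray via Lemma \ref{lemma}(3) is clean, but showing the reachable radii on a fixed ray exhaust $(0,\infty)$ requires carefully composing the angular connecting flows with the radial scalings of opposite sign and tracking the resulting multiplicative factors; it is here that the existence of $u_\pm\in\operatorname{int}B$ with $\alpha u_\pm$ of both signs, guaranteed by $0\in\operatorname{int}\Omega$, is essential. By contrast, the uniqueness argument is a direct consequence of the contraction in Proposition \ref{solutions} once the correct boundary functional $h$ is identified, so I expect no difficulty there.
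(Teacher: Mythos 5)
Your proof is correct, and although it rests on the same two pillars as the paper's proof --- Lemma \ref{lemma} for the geometry of $\mathcal{C}$ and Proposition \ref{solutions} for the contraction estimates --- both main steps are executed along genuinely different lines. For approximate controllability the paper fixes a source and a target point, connects their rays by Lemma \ref{lemma}(3), and then corrects the radial error by a \emph{single} scaling, either after the transfer (along the target ray) or before it (along the source ray, via the homogeneity $\varphi(t,\lambda p,u)=\lambda\varphi(t,p,u)$); this forces a case analysis on the sign of $m_{u_1}m_{u_2}$ and on relative magnitudes, with an extra intermediate ray inserted when $u_1u_2>0$. Your shuttle argument replaces that case analysis by a semigroup computation: round trips through rays $r_{u_\pm}$ with $\alpha u_\pm$ of both signs yield attainable radial factors $[C_+,\infty)$ and $(0,C_-]$, whose products exhaust $(0,\infty)$. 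This is cleaner and avoids the sign bookkeeping, at the price of being less explicit about the trajectories; it also leans on \cite{CK} for maximality (embedding $\mathcal{C}$ in a control set $\mathcal{D}$ and showing $\mathcal{D}=\mathcal{C}$), which the paper gets for free from its exclusion argument. For uniqueness, the paper converts Proposition \ref{solutions} into the inclusion $\mathcal{O}^+(\varphi(t,p,u))\subset N_{\epsilon(t)}(\mathcal{C})$ with $\epsilon(t)\to 0$ and intersects the closed neighborhoods, whereas your boundary functional $h$ produces the contradiction in one stroke: a control set meeting $G\setminus\mathcal{C}$ would contain points $p$ and $q=\varphi(T,p,u)$ with $p\in\operatorname{cl}\mathcal{O}^+(q)$, impossible since $h$ contracts along orbits. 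Same idea, shorter execution.

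Two places need tightening. First, in passing from $\operatorname{int}\mathcal{C}\subset\mathcal{O}^+(p)$ to property (ii) on all of $\mathcal{C}$, ``density plus continuity'' only takes care of target points; for a \emph{source} point $p$ lying on a boundary ray you must first steer $p$ into $\operatorname{int}\mathcal{C}$. This follows from the formula for $g_u$ in the proof of Lemma \ref{lemma}(3): for any $u\in\operatorname{int}B$ the slope $y/x$ moves strictly toward $m_u$, so $\varphi(t,p,u)\in\operatorname{int}\mathcal{C}$ for $t>0$; this is exactly how the paper handles the boundary, and your argument should invoke it rather than continuity. Second, your claim that every point of $\mathcal{O}^+(q)$ has $h\leq h(q)$ is only literally true when $h(q)\geq 0$; when $h(q)<0$ the estimate gives $h\leq\mathrm{e}^{bt}h(q)<0$. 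What your contradiction actually needs, and what does hold in both cases, is $h\leq\max(0,h(q))\leq\mathrm{e}^{bT}h(p)<h(p)$ on $\mathcal{O}^+(q)$, a gap that survives taking closures. Neither point affects the validity of the overall argument.
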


\begin{proof}
We will show that $\mathcal{C}=\operatorname{cl}(\mathcal{O}^{+}(x, y))$ for
any $(x, y)\in \mathcal{C}$.

Take $(x_{1}, y_{1}), (x_{2}, y_{2})\in \mathrm{int}\mathcal{C}$ and consider
$u_{1}, u_{2}\in \mathrm{int}B$, $u_{1}\neq u_{2}$, such that $m_{u_{i}}%
=y_{i}/x_{i}$, $i=1, 2$. Consider also $u\in \Omega$ and $t_{0}>0$ such that
$\varphi(t_{0}, r_{u_{1}}, u)=r_{2}$ and denote $(x_{2}^{\prime},
y_{2}^{\prime}):=\varphi(t_{0}, (x_{1}, y_{1}), u)$.

Assume $m_{u_{1}}m_{u_{2}}<0$. Since this condition is equivalent to
$u_{1}u_{2}<0$ we have:
\begin{itemize}

\item[(i)] $u_{1}\alpha<0<u_{2}\alpha$ 

\begin{itemize}

\item[1.] $|(x_{2}^{\prime}, y_{2}^{\prime})|\leq|(x_{2}, y_{2})|$: Since
$\varphi(t, (x_{2}, y_{2}), u_{2})=\mathrm{e}^{u_{2}\alpha t}(x_{2}, y_{2})$
there exists $t_{1}\geq0$ such that
\[
\varphi(t_{1}, (x_{2}, y_{2}), u_{2})=(x_{2}^{\prime}, y_{2}^{\prime
})\; \; \implies \; \; \varphi(t_{1}, \varphi(t_{0}, (x_{1}, y_{1}), u),
u_{2})=(x_{2}, y_{2}).
\]
Hence $(x_{2}, y_{2})\in \mathcal{O}^{+}(x_{1}, y_{1})$; 

\item[2.] $|(x_{2}^{\prime}, y_{2}^{\prime})|>|(x_{2}, y_{2})|$: Since
$\varphi(t, (x_{1}, y_{1}), u_{1})=\mathrm{e}^{u_{1}\alpha t}(x_{1}, y_{1})$
and $\lambda:=|(x_{2}, y_{2})|/|(x_{2}^{\prime}, y_{2}^{\prime})|<1$, there
exists $t_{1}>0$ such that $\varphi(t_{1}, (x_{1}, y_{1}), u_{1}%
)=\lambda(x_{1}, y_{1})$. Therefore,
\[
\varphi(t_{0}, \varphi(t_{1}, (x_{1}, y_{1}), u_{1}), u)=\lambda \varphi(t_{0},
(x_{1}, y_{1}), u)=\lambda(x_{2}^{\prime}, y_{2}^{\prime})\in r_{u_{2}}.
\]
However,
\[
|\varphi(t_{0}, \varphi(t_{1}, (x_{1}, y_{1}), u_{1}), u)|=\lambda
|(x_{2}^{\prime}, y_{2}^{\prime})|=|(x_{2}, y_{2})|\implies \varphi(t_{0},
\varphi(t_{1}, (x_{1}, y_{1}), u_{1}), u)=(x_{2}, y_{2})
\]
and hence $(x_{2}, y_{2})\in \mathcal{O}^{+}(x_{1}, y_{1})$. 
\end{itemize}

\bigskip

\item[(ii)] $u_{2}\alpha<0<u_{1}\alpha$ 

\begin{itemize}

\item[1.] $|(x_{2}^{\prime}, y_{2}^{\prime})|\geq|(x_{2}, y_{2})|$: Since
$\varphi(t, (x_{2}, y_{2}), u_{2})=\mathrm{e}^{u_{2}\alpha t}(x_{2}, y_{2})$
there exists $t_{1}\geq0$ such that
\[
\varphi(t_{1}, (x_{2}, y_{2}), u_{2})=(x_{2}^{\prime}, y_{2}^{\prime
})\; \; \implies \; \; \varphi(t_{1}, \varphi(t_{0}, (x_{1}, y_{1}), u),
u_{2})=(x_{2}, y_{2})
\]
and hence $(x_{2}, y_{2})\in \mathcal{O}^{+}(x_{1}, y_{1})$; 

\item[2.] $|(x_{2}^{\prime}, y_{2}^{\prime})|<|(x_{2}, y_{2})|$: Since
$\varphi(t, (x_{1}, y_{1}), u_{1})=\mathrm{e}^{u_{1}\alpha t}(x_{1}, y_{1})$
and $\lambda:=|(x_{2}, y_{2})|/|(x_{2}^{\prime}, y_{2}^{\prime})|>1$, there
exists $t_{1}>0$ such that $\varphi(t_{1}, (x_{1}, y_{1}), u_{1}%
)=\lambda(x_{1}, y_{1})$. Therefore,
\[
\varphi(t_{0}, \varphi(t_{1}, (x_{1}, y_{1}), u_{1}), u)=\lambda \varphi(t_{0},
(x_{1}, y_{1}), u)=\lambda(x_{2}^{\prime}, y_{2}^{\prime})\in r_{u_{2}}.
\]
However,
\[
|\varphi(t_{0}, \varphi(t_{1}, (x_{1}, y_{1}), u_{1}), u)|=\lambda
|(x_{2}^{\prime}, y_{2}^{\prime})|=|(x_{2}, y_{2})|\implies \varphi(t_{0},
\varphi(t_{1}, (x_{1}, y_{1}), u_{1}), u)=(x_{2}, y_{2})
\]
and hence $(x_{2}, y_{2})\in \mathcal{O}^{+}(x_{1}, y_{1})$. 
\end{itemize}
\end{itemize}

Let us assume now that $m_{u_{1}}m_{u_{2}}>0$. Again, this condition is
equivalent to $u_{1}u_{2}>0$. Let us analyze the case where $u_{1}%
\alpha<0\; \; \mbox{ and }\; \;u_{2}\alpha<0$, since the other possibility is analogous.

In this case, by considering $u\in \mathrm{int}B$ with $u\alpha>0$ we have by
the proof of the Lemma \ref{lemma} that
\[
\frac{\varphi_{2}(t, (x_{1}, y_{1}), u)}{\varphi_{1}(t, (x, y), u)}\rightarrow
m_{u}, \; \;t\rightarrow+\infty \; \; \implies \; \; \frac{\varphi_{2}(t, (x, y),
b\alpha^{-1})}{\varphi_{1}(t, (x, y), u)}>0, \; \; \mbox{ for some }t>0.
\]
By the above, we have that
\[
(x_{2}, y_{2})\in \mathcal{O}^{+}(\varphi(t, (x_{1}, y_{1}),
u))\; \; \; \mbox{ and hence }\; \; \;(x_{2}, y_{2})\in \mathcal{O}^{+}(x_{1},
y_{1})
\]

By the arbitrariness of the choosen points, we obtain $\mathrm{int}%
\mathcal{C}\subset \mathcal{O}^{+}(x, y)$ for any $(x, y)\in \mathrm{int}%
\mathcal{C}$. Since $\mathcal{C}$ is closed, we get $\mathcal{C}%
\subset \operatorname{cl}(\mathcal{O}^{+}(x, y))$ for all $(x, y)\in
\mathrm{int}\mathcal{C}$. On the other hand, if $(x, y)\in \mathcal{C}$ and
$u\in \mathrm{int}B$, by the proof of Lemma \ref{lemma} $\varphi(t, (x, y),
u)\in \mathrm{int}\mathcal{C}$ and consequently
\[
\mathcal{C}\subset \operatorname{cl}(\mathcal{O}^{+}(\varphi(t, (x, y),
u)))\subset \operatorname{cl}(\mathcal{O}^{+}(x, y))\subset \mathcal{C}%
\; \; \implies \; \; \mathcal{C}=\operatorname{cl}(\mathcal{O}^{+}(x, y)), \; \;(x,
y)\in \mathcal{C}%
\]
showing that $\mathcal{C}$ is a control set.

Now we prove the uniqueness of $\mathcal{C}$. If
\[
\mathcal{C}_{1}:=\{(x, y)\in G; \;y<m_{u_{*}}%
x\} \; \; \; \mbox{ and }\; \; \; \mathcal{C}_{2}:=\{(x, y)\in G; \;y>m_{u_{*}}x\}
\]
we have that
\[
G\setminus \mathcal{C}=\left \{
\begin{array}
[c]{cc}%
\mathcal{C}_{1}\cup \mathcal{C}_{2} & \mbox{ if }b\notin \alpha \Omega,\\
\mathcal{C}_{1} & \mbox{ if }\; \;b\in \alpha \Omega \; \; \mbox{ and }\; \; \alpha
<0\\
\mathcal{C}_{2} & \mbox{ if }\; \;b\in \alpha \Omega \; \; \mbox{ and }\; \; \alpha>0
\end{array}
\right.
\]
Let $(x, y)\in \mathcal{C}_{1}$ and assume that $\varphi(t, (x, y),
u)\in \mathcal{C}_{1}$ for some $u\in \mathcal{U}$ and $t>0$. Then, if $(x_{t},
y_{t})=(\varphi_{1}(t, (x, y), u), \varphi_{2}(t, (x, y), u)$, Proposition \ref{solutions} gives that  
\[
\varphi(s, \varphi(t, (x, y), u), u^{\prime})=\varphi(s, (x_{t}, m_{u_{*}%
}x_{t}), u^{\prime})+(0, \mathrm{e}^{bs}(y_{t}-m_{u_{*}}x_{t}))
\]
implying that
\[
|\varphi(s, \varphi(t, (x, y), u), u^{\prime})-\varphi(s, (x_{t}, m_{u_{*}%
}x_{t}), u^{\prime})|=\mathrm{e}^{bs}\left| \varphi_{2}(t, (x, y),
u)-m_{u_{*}}\varphi_{1}(t, (x, y), u)\right|
\]
\[
=\mathrm{e}^{bs}\bigl(m_{u_{*}}\varphi_{1}(t, (x, y), u)-\varphi_{2}(t, (x,
y), u)\bigr)\leq \mathrm{e}^{b(s+t)}m_{u_{*}}x-y.
\]
However, $(x_{t}, m_{u_{*}}x_{t})\in \mathcal{C}$ and $\mathcal{C}$ is
positively-invariant, hence
\[
\varphi(s, \varphi(t, (x, y), u), u^{\prime})\in N_{\epsilon(t)}%
(\mathcal{C}), \; \; \mbox{ where }\; \; \epsilon(t)=\mathrm{e}^{bt}m_{u_{*}%
}x-y>0.
\]
Since $s>0$ and $u^{\prime}\in \mathcal{U}$ where arbitrary, we have that (see Figure \ref{fig7})
\begin{equation}
\label{bla}\mathcal{O}^{+}(\varphi(t, (x, y), u))\subset N_{\varepsilon
(t)}(\mathcal{C}).
\end{equation}
Consequently, if $\widetilde{\mathcal{C}}$ is a control set and there exists
$(x, y)\in \widetilde{\mathcal{C}}\cap \mathcal{C}_{1}\neq \emptyset$, by the
controlled invariance of $\widetilde{\mathcal{C}}$ there exists $u\in
\mathcal{U}$ such that $\varphi(t, (x, y), u)\in \widetilde{\mathcal{C}}$ for
any $t>0$. If there is $t_{0}>0$ such that $\varphi(t_{0}, (x, y),
u)\in \mathcal{C}$ then $\widetilde{\mathcal{C}}\subset \operatorname{cl}%
\mathcal{O}^{+}(\varphi(t_{0}, (x, y), u))=\mathcal{C}$.  On the other hand,
if $\varphi(t, (x, y), u)\notin \mathcal{C}$ for any $t>0$ then $\varphi(t, (x,
y), u)\in \mathcal{C}_{1}$ for any $t>0$. By equation \ref{bla} we get
\[
\widetilde{\mathcal{C}}\subset \bigcap_{t>0}\mathcal{O}^{+}(\varphi(t, (x, y),
u))\subset \bigcap_{t>0}\operatorname{cl}(N_{\epsilon(t)}(\mathcal{C}%
))=\mathcal{C},
\]
where we used that $\mathcal{C}$ is closed and that $\epsilon
(t)\rightarrow0$ as $t\rightarrow+\infty$. In any case we must have
$\widetilde{\mathcal{C}}\subset \mathcal{C}$ implying $\mathcal{C}_{1}%
\cap \mathcal{C}\neq \emptyset$ which is a contradiction since $\mathcal{C}%
_{1}\subset G\setminus \mathcal{C}$. Therefore, there is no control set
intersecting $\mathcal{C}_{1}$.

In an analogous way we show that there is no control set intersecting
$\mathcal{C}_{2}$ and therefore $\mathcal{C}$ is the only control set,
concluding the proof. 
\end{proof}

\begin{figure}[h]
\begin{center}
\includegraphics[scale=1.3]{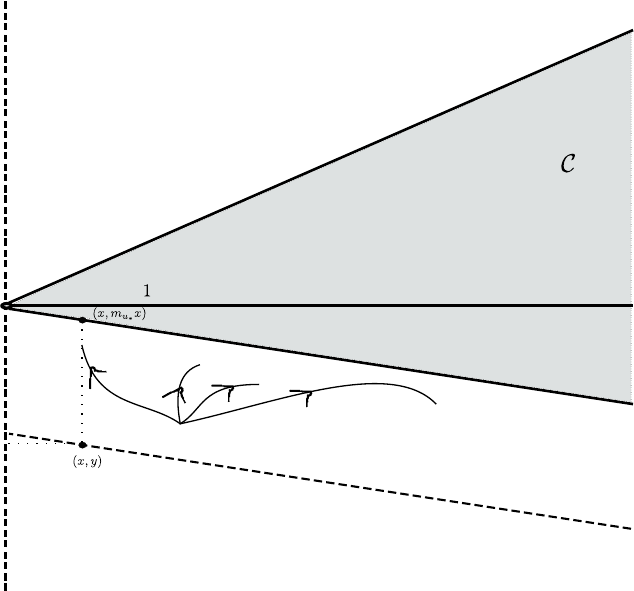}
\end{center}
\par
\caption{An $\epsilon(t)$-neighborhood of $\CC$ and the behavior of the solutions outside $\CC$.}
\label{fig7} \end{figure}

\begin{remark}
Let us notice that the previous result shows that $\Sigma$ admit exactly one
control set and it has nonempty interior. For more general Lie groups, the
authors showed in \cite{DSAyGZ} that linear control systems admits, under
strong topological conditions, exactly one control set with nonempty interior.
However, there are no information about the control sets with empty interior.
\end{remark}

\subsection{Automorphisms of $G$ and control sets}

By the calculations in the previous sections, any arbitrary linear control system on $G$ is conjugated to one of the linear control systems (\ref{2}), (\ref{4}), (ref{3}) or (\ref{system1}) by an automorphism. Therefore, the control sets of $\Sigma$ can be reobtained from the control sets of the above system by considering the preimage of the automorphism in question.

With that in mind we have the following geometric view of the control sets of $\Sigma$.

\begin{theorem}
	For the linear control system $\Sigma$ it holds:
	\begin{itemize}
		\item[1.] $\alpha=a\alpha+b\beta=0$ and any vertical line close to $(1, 0)$ is a control set;
		\item[2.] $\alpha=0$ and $a\alpha+b\beta\neq 0$, and the control sets are vertical segments intersecting
		$$\{(x, y)\in G; \;y=-ab^{-1}(x-1)\};$$
		\item[3.] $\alpha\neq 0$ and $a\alpha+b\beta=0$, and $\Sigma$ admits only the control set
		$$\{(x, y)\in G; \; y=\beta\alpha^{-1}(x-1)\};$$
		\item[4.] $\alpha(a\alpha+b\beta)\neq 0$ with $b=0$ and the unique control set is the whole $G$;
		\item[5.] $\alpha(a\alpha+b\beta)\neq 0$ with $b\neq 0$ and the unique control set is a cone in $G$ with (open) edge on the point $(0, ab^{-1})$.
	\end{itemize}
\end{theorem}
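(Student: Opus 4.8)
The plan is to recover each of the five cases directly from the four normal forms already analyzed, by tracking how the conjugating automorphism transports the control set. Since every automorphism of $G$ has the form $\psi(x,y)=(x,c(x-1)+dy)$ with $d\in\R^{\ast}$, and since the diffeomorphisms used in Sections 3.3.1 and 3.3.2 (namely $\psi(x,y)=(x,a^{-1}y-\beta\alpha^{-1}(x-1))$ and $\psi(x,y)=(x,\gamma^{-1}(a(x-1)+by))$) are of exactly this affine-in-$(x-1)$, linear-in-$y$ type, each such $\psi$ is invertible with inverse again of the same form. Because control sets are preserved under conjugation of control systems, the control sets of $\Sigma$ are precisely the $\psi^{-1}$-images of the control sets computed for the simplified systems. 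So for each of the five regimes I would simply identify which normal form applies, write down its control set(s) from the earlier sections, and apply $\psi^{-1}$.

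First I would dispatch the easy cases. Case 1 ($\alpha=a\alpha+b\beta=0$) needs no automorphism at all: Section 3.1 already shows $\Sigma$ itself has the vertical lines $\{x\}\times\R$ as control sets for $x\in(1-\varepsilon,1+\varepsilon)$, giving the stated conclusion verbatim. Case 4 ($b=0$ under LARC) is the Theorem proved in Section 3.3.1, where the control set of the normal form (\ref{3}) is all of $G$; since $\psi$ is a diffeomorphism of $G$ onto $G$, the preimage is again $G$. Cases 2 and 3 require inverting one explicit automorphism each. For Case 2, the system is conjugated by $\psi(x,y)=(x,a(x-1)+by)$ to (\ref{2}), whose control sets are the vertical segments $\{x\}\times[y_1(x),y_2(x)]$; applying $\psi^{-1}(x,y)=(x,b^{-1}(y-a(x-1)))$ sends each such segment to a vertical segment centered on the line $y=-ab^{-1}(x-1)$, since $\psi^{-1}$ maps $\{y=0\}$ to that line. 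For Case 3, the map $\psi(x,y)=(x,y-\beta\alpha^{-1}(x-1))$ conjugates $\Sigma$ to (\ref{4}), whose unique control set is $\R_+\times\{0\}$; pulling back by $\psi^{-1}(x,y)=(x,y+\beta\alpha^{-1}(x-1))$ gives exactly $\{y=\beta\alpha^{-1}(x-1)\}$.

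Case 5 is where the only real geometric content lies, and I expect it to be the main obstacle — though a mild one. Here $\psi(x,y)=(x,\gamma^{-1}(a(x-1)+by))$ with $\gamma=a\alpha+b\beta$ conjugates $\Sigma$ to (\ref{system1}), whose control set $\mathcal{C}$ is, by the Remark following Lemma \ref{lemma}, a cone in $G$ with wedge at the origin $(0,0)\in\R^2$. I would compute $\psi^{-1}(x,y)=(x,b^{-1}(\gamma y-a(x-1)))$ and check where it sends the wedge point. The rays $r_u$ pass through $(0,0)$, i.e.\ through the limit point with $x=0$; substituting $x=0,\,y=0$ into $\psi^{-1}$ gives the second coordinate $b^{-1}(0-a(0-1))=ab^{-1}$, so the vertex is transported to $(0,ab^{-1})$. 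Since $\psi^{-1}$ is affine in $(x-1)$ and linear in $y$ it carries straight rays to straight rays and hence the cone to a cone, with new wedge at $(0,ab^{-1})$, matching the statement. The one point requiring a sentence of care is that the vertex $(0,0)$ is not in $G$ (it lies on the boundary $x=0$), so the map must be read on the closure in $\R^2$; I would note that $\psi$ extends to an affine map of $\R^2$ fixing the line $x=0$ setwise, which justifies evaluating $\psi^{-1}$ at the boundary vertex. Assembling the five cases then completes the proof.

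\begin{proof}
Every automorphism and every conjugating diffeomorphism used in Sections 3.1--3.3 has the affine form $\psi(x,y)=(x,c(x-1)+dy)$ with $d\neq0$, hence extends to an invertible affine map of $\R^2$ preserving the line $\{x=0\}$, with inverse of the same form. As conjugation preserves control sets, the control sets of $\Sigma$ are the $\psi^{-1}$-images of those computed for the normal forms.

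\emph{Case 1.} No conjugation is needed; Section 3.1 shows the vertical lines $\{x\}\times\R$ with $x\in(1-\varepsilon,1+\varepsilon)$ are control sets.

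\emph{Case 2.} Here $\psi(x,y)=(x,a(x-1)+by)$ conjugates $\Sigma$ to (\ref{2}), whose control sets are $\{x\}\times[y_1(x),y_2(x)]$. Since $\psi^{-1}$ sends $\{y=0\}$ to $\{y=-ab^{-1}(x-1)\}$ and preserves vertical lines, the control sets are vertical segments meeting $\{(x,y)\in G;\,y=-ab^{-1}(x-1)\}$.

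\emph{Case 3.} Here $\psi(x,y)=(x,y-\beta\alpha^{-1}(x-1))$ conjugates $\Sigma$ to (\ref{4}), whose unique control set is $\R_+\times\{0\}$. Applying $\psi^{-1}(x,y)=(x,y+\beta\alpha^{-1}(x-1))$ yields the unique control set $\{(x,y)\in G;\,y=\beta\alpha^{-1}(x-1)\}$.

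\emph{Case 4.} By the Theorem of Section 3.3.1 the control set of the normal form is all of $G$, and the conjugating diffeomorphism maps $G$ onto $G$, so the unique control set of $\Sigma$ is $G$.

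\emph{Case 5.} Here $\psi(x,y)=(x,\gamma^{-1}(a(x-1)+by))$, $\gamma=a\alpha+b\beta$, conjugates $\Sigma$ to (\ref{system1}), whose unique control set $\mathcal{C}$ is a cone with wedge at $(0,0)$. The inverse $\psi^{-1}(x,y)=(x,b^{-1}(\gamma y-a(x-1)))$ is affine, hence carries rays to rays and $\mathcal{C}$ to a cone; evaluating its affine extension at the wedge $(0,0)$ gives second coordinate $b^{-1}(0-a(0-1))=ab^{-1}$. Thus the unique control set of $\Sigma$ is a cone in $G$ with wedge at $(0,ab^{-1})$.
\end{proof}
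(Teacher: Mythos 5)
Your proposal is correct and follows essentially the same route as the paper: the paper's own (very brief) proof also reduces each case to the normal forms of Sections 3.1--3.3 and invokes the facts that an automorphism $\psi(x,y)=(x,c(x-1)+dy)$ preserves vertical lines and sends rays to lines through $(0,-c)$, which is exactly what your explicit computations of $\psi^{-1}$ verify (e.g.\ your vertex calculation $b^{-1}(0-a(0-1))=ab^{-1}$ in Case 5 is the instance $c=-ab^{-1}$ of the paper's fact (i)). Your write-up is in fact more detailed than the paper's two-line sketch, including the worthwhile remark that the cone's vertex lies on the boundary $\{x=0\}$ and must be handled via the affine extension of $\psi$ to $\mathbb{R}^2$.
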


The proof of the previous result is straightforward and follows directly from the following facts concerning an arbitrary automorphism. Let $\psi(x, y)=(x, c(x-1)+dy)$ be an automorphim of $G$. It holds: 

\begin{itemize}
	\item[(i)] If $r\subset G$ is a ray, $\psi(r)=l\cap G$ where $l$ is a line passing by $(0, -c)$;
	
	\item[(ii)] $\psi$ preserves any vertical line in $G$. 
\end{itemize}


\begin{thebibliography}{99}                                                                                               %


\bibitem {DSAy} \newblock V. Ayala and A. Da Silva,
\newblock \emph{Controllability of Linear Control Systems on Lie Groups with
Semisimple Finite Center},  \newblock SIAM Journal on Control and Optimization
55 No 2 (2017), 1332-1343.

\bibitem {DSAyGZ} \newblock V. Ayala, A. Da Silva and G. Zsigmond,
\newblock \emph{Control sets of linear systems on Lie groups.}
\newblock Nonlinear Differential Equations and Applications - NoDEA 24 No 8
(2017), 1 - 15.

\bibitem {San} \newblock V. Ayala and L.A.B. San Martin,
\newblock \emph{Controllability properties of a class of control systems on
Lie groups},  \newblock  Lecture Notes in Control and Information Sciences 258
(2001), 83 - 92.

\bibitem {AyTi} \newblock V. Ayala and J. Tirao, \newblock \emph{Linear control
systems on Lie groups and Controllability}, \newblock Eds. G. Ferreyra et al.,
Amer. Math. Soc., Providence, RI, 1999.

\bibitem {2-dim}V. Ayala and L. San Martin. \emph{Controllability of }%
$2$-\emph{dimensional Bilinear Systems: Restricted Controls, Discrete-Time}.
Proy, Vol. 18, pp. 207-223, 1999.



\bibitem {CK} \newblock F. Colonius and W. Kliemann,  \newblock \emph{The
Dynamics of Control},  \newblock Birkh\"{a}user, Boston, 2000.

\bibitem {DS} \newblock A. Da Silva,  \newblock \emph{Controllability of
linear systems on solvable Lie groups},  \newblock SIAM Journal on Control and
Optimization 54 No 1 (2016), 372-390.

\bibitem {DaJo} \newblock M. Dath and P. Jouan,
\newblock \emph{Controllability of Linear Systems on Low Dimensional Nilpotent
and Solvable Lie Groups},  \newblock Journal of Dynamics and Control Systems
22 N0 2 (2016), 207-225.








\bibitem {Jouan1} \newblock Ph. Jouan,  \newblock \emph{Controllability of
linear systems on Lie group},  \newblock J. Dyn. Control Syst. 17 (2011), 591-616.

\bibitem {Jouan2} \newblock Ph. Jouan,  \newblock \emph{Equivalence of Control
Systems with Linear Systems on Lie Groups and Homogeneous Spaces},
\newblock ESAIM: Control Optimization and Calculus of Variations, 16 (2010) 956-973.



\bibitem {Shattler} 
\newblock U. Ledzewick, H. Shattler, 
\newblock \emph{Optimal controls for a two compartment model for cancer	chemotherapy with quadratic objective.}, 
\newblock Proceedings of MTNS (2006), Kyoto,Japan.

\bibitem {Leitmann} \newblock G. Leitmann,  \newblock \emph{Optimization
Techniques with Application to Aerospace Systems},  \newblock Academic Press
Inc., London, 1962.

\bibitem {Markus} \newblock L. Markus,  \newblock \emph{Controllability of
multi-trajectories on Lie groups},  \newblock  Proceedings of Dynamical
Systems and Turbulence, Warwick 1980, Lecture Notes in Mathematics 898, 250-265.


\bibitem {P-B-G-M} \newblock L. S. Pontryagin, V. G. Boltyanskii, R. V.
Gamkrelidze and E.F. Mishchenko, \newblock \emph{The mathematical theory of
optimal processes}, \newblock Interscience Publishers John Wiley \& Sons,
Inc., New Yor , A. \emph{Control and Systems Engineering A Report
on Four Decades of Contributions, }Studies in Systems, Decision and Control, 2015.


\bibitem {Shell}K. Shell, \newbox \emph{Applications of Pontryagin's Maximum
Principle to Economics}, \newblock Mathematical Systems Theory and Economics I
and II, Volume 11/12 of the series Lecture Notes in Operations Research and
Mathematical Economics (1968), 241-292.































\end{thebibliography}
\end{document}